\newtheorem{thm}{Theorem}
\begin{document}

\title{Adaptive Pricing in Unit Commitment Under Load and Capacity Uncertainty}

\author{Dimitris Bertsimas,~\IEEEmembership{Member,~IEEE,} Angelos Georgios Koulouras,~\IEEEmembership{Student Member,~IEEE}
\thanks{D. Bertsimas and A. G. Koulouras are with the Operations Research Center at the Massachusetts Institute of Technology: dbertsim@mit.edu, angkoul@mit.edu. Research partially supported by the Advanced Research Projects Agency with award number DE-AR0001282.}
}

\markboth{IEEE TRANSACTIONS ON POWER SYSTEMS,~Vol.~38, No.~3, May~2023}%
{Shell \MakeLowercase{\textit{et al.}}: A Sample Article Using IEEEtran.cls for IEEE Journals}


\maketitle

\begin{abstract}
The increase of renewables in the grid and the volatility of the load create uncertainties in the day-ahead prices of electricity markets. Adaptive robust optimization (ARO) and stochastic optimization have been used to make commitment and dispatch decisions that adapt to the load and capacity uncertainty. These approaches have been successfully applied in practice but current pricing approaches used by US Independent System Operators (marginal pricing) and proposed in the literature (convex hull pricing) have two major disadvantages: a) they are deterministic in nature, that is they do not adapt to the load and capacity uncertainty, and b) require uplift payments to the generators that are typically determined by ad hoc procedures and create inefficiencies that motivate self-scheduling. In this work, we extend pay-as-bid and uniform pricing mechanisms to propose the first adaptive pricing method in electricity markets that adapts to the load and capacity uncertainty, eliminates post-market uplifts and deters self-scheduling, addressing both disadvantages. 
\end{abstract}

\begin{IEEEkeywords}
Pricing, Robust Optimization, Adaptive Optimization, Unit Commitment, Energy Markets, Uncertainty
\end{IEEEkeywords}

\section{Introduction}

The future of electricity markets is expected to prominently feature renewable energy sources, which bring volatility and unpredictability to the markets \cite{pinson2023may, morales2013integrating}. With the increase of wind and solar power comes an increase in the price volatility, among other things, which creates issues for both the market operators and the market participants. Specifically, \cite{oren2004pay} shows that as the penetration of wind power increases, the market-clearing price becomes more volatile and uncertain, making it difficult for wind power producers to forecast and bid their power accurately. In addition, changes are also expected on the consumer side. For example, the introduction of electric vehicles could change demand patterns as well as available storage options \cite{hannan2022vehicle, tan2016integration}. Also, consumers could simultaneously play the role of producers thereby becoming \say{prosumers} \cite{khorasany2020new, morstyn2018using, zhang2021strategic}.

So far, most energy markets do not directly address these challenges as they have been set up using deterministic approaches \cite{silva2022short}. However, there has been recent work in robust and stochastic optimization with promising results in addressing the unit commitment (UC) problem under uncertainty \cite{bertsimas2012adaptive, lorca2016multistage, xiong2016distributionally, zheng2014stochastic, jiang2011robust}. Adaptive robust optimization (ARO) minimizes the cost under the worst-case scenario in an uncertainty set, while stochastic optimization accounts for the probability distribution of the uncertain parameters by minimizing the expected cost. In general, ARO has been used to protect against different types of uncertainty, from contingencies and demand planning to wind energy output and has lower average dispatch and total costs, indicating better economic efficiency and significantly reduces the volatility of the total costs \cite{bertsimas2012adaptive}. In \cite{bertsimas2012adaptive}, the authors use ARO to find solutions in the UC problem that are robust to uncertain nodal injections. This work is extended in \cite{lorca2016multistage}, which deals with multistage UC and dynamic uncertainty sets related to the capacity of renewable energy sources. The authors of \cite{xiong2016distributionally} also address wind uncertainty. They propose a distributionally robust approach, where they define a family of wind power distributions and minimize the expected cost under the worst-case distribution. Our approach is based on ARO but there has also been promising work on stochastic optimization. See \cite{zheng2014stochastic, reddy2017review} for an introduction to these methods.

While the previous approaches have been successful in dealing with the volatility in energy systems and markets, the pricing methods available are still mostly deterministic \cite{dutta2017literature, mazzi2017price}. One of the more popular schemes is uniform marginal-cost pricing or \say{IP pricing}, which may result in losses for some generators \cite{o2005efficient, o2016dual}. Therefore, side-payments or uplifts are provided to these generators to make them whole, which may modify their incentives. Alternative mechanisms have been proposed, including removing the negative uplifts, such that no generator incurs a loss, and raising the commodity price above marginal cost to reduce uplifts \cite{bjorndal2008equilibrium, galiana2003reconciling}. One principled version of that is the \say{convex hull} pricing scheme, which minimizes the uplifts by langragifying the energy balance constraint and maximizing over its dual price \cite{hua2016convex, andrianesis2021computation}. Possibly only \cite{ye2016uncertainty} and \cite{fang2019introducing} discuss pricing in robust UC. However, they do not use ARO and \cite{fang2019introducing} does not offer extended theoretical results. For a detailed review of the pricing methods, see \cite{liberopoulos2016critical}.

Current pricing approaches used by Independent System Operators (ISOs) (marginal pricing) and proposed in the literature (convex hull pricing) have two major disadvantages: 
a) they are deterministic in nature, that is they do not adapt to the load and capacity uncertainty, and b) require uplift payments to the generators that are typically determined by ad hoc procedures and create inefficiencies that motivate self-scheduling. In contrast,  we extend pay-as-bid and uniform pricing mechanisms to propose the first adaptive pricing method in electricity markets that adapts to the load and capacity uncertainty, eliminates post-market uplifts and deters self-scheduling, addressing both disadvantages. 

\subsection{Contributions}

In this work, we propose the first pricing method for ARO in energy markets with load and capacity uncertainty by offering contracts contingent to the uncertainty in the data of the day-ahead problem. We summarize the main contributions:

\begin{itemize}
    \item We introduce adaptive pay-as-bid and marginal pricing contracts for UC problems featuring load and capacity uncertainty. We specify fully the day-ahead payments and provide an upper-bound on the next-day or intra-day payments based on the day-ahead commitments. The payments are functions of the load and capacity uncertainty.
    \item We show that the adaptive day-ahead pay-as-bid scheme is equivalent to the adaptive uniform pricing scheme. Also, if the worst-case uncertainty is realized in the next day, the forecasted intra-day payments are optimal and the generators are indifferent between the market schedule and their optimal schedule, eliminating self-scheduling.
    \item We display the adaptive pricing in detail on the adaptive robust version of the Scarf example, see 
    \cite{o2005efficient}, and on realistic UC problems featuring ramp constraints. We compare it to deterministic marginal and convex hull pricing and find that adaptive pricing eliminates the corrections that are necessary in deterministic day-ahead problems. 
\end{itemize}

The paper is organized as follows: in Section \ref{sec:sec_method}, we introduce the ARO formulations for UC. In Section \ref{sec:adamarkets}, we present the adaptive pricing and its theoretical properties on UC with load and capacity uncertainty. In Section \ref{sec:sec_load_example} and in Section \ref{sec:sec_load_cap_example} we provide adaptive pricing on examples with load and capacity uncertainty. In Section \ref{sec:sec_multi}, we demonstrate our method on a realistic example with ramp constraints and compare it to convex hull pricing. Finally, Section \ref{sec:ada_conclusion} summarizes our conclusions.

The notation that we use is as follows: we use bold faced characters such as $ \boldsymbol{p} $ to represent vectors and capital letters such as $\boldsymbol{V}$ to represent matrices. Also, $\boldsymbol{p}^{T}$ denotes the transpose of the column vector $\boldsymbol{p}$ and $\boldsymbol{e}_{i}$ is the $i$th unit vector. We define $ [I] = \{1, \dots, I\} $. The ${\cal L}_{1}$ norm of a vector refers to the norm $\| \boldsymbol{x} \|_{1} =  \sum_{i=1}^{I} |x_i|$, the ${\cal L}_{2}$ norm refers to $\| \boldsymbol{x} \|_{2} = \sqrt{\sum_{i=1}^{I} x_{i}^{2}}$ and ${\cal L}_{\infty}$ refers to $\| \boldsymbol{x} \|_{\infty} = \max_{i \in [I]} |x_i|$.

\section{Adaptive Robust UC}
\label{sec:sec_method}

In this section, we introduce the ARO formulation for an energy market robust to load and capacity uncertainty. The formulation is based on the popular Scarf example, see \cite{o2005efficient}, but considers uncertainty in the load and capacity parameters. Reserves, transmission and ramp constraints can be added using linear constraints with small changes.

Consider the following example which tries to minimize the total cost of meeting a fixed level of demand.
$$  
    {\everymath{\displaystyle}
    \scalebox{0.99}{
    $
    \begin{array}{rlr}
        \min_{x, p} & \sum_{i=1}^{I} F_{i} x_{i} + C_{i} p_{i} \\
        \text{s.t.} & \sum_{i=1}^{I} p_{i} = \sum_{j=1}^{J} \bar{q}_{j}, \\
        & p_{i} \leq \bar{p}^{\max}_{i} x_{i}, \quad \forall i \in [I], \\
        & p_{i} \geq 0, \quad \forall i \in [I], \\
        & x_{i} \in \{0, 1\}, \quad \forall i \in [I]. \\
    \end{array}
    $
    }
    }
$$
We have $I$ generators, each with a turn-on cost of $F_{i}$ and a unit production cost of $C_{i}$. The expected maximum production levels of generator $i$ are $\bar{p}_{i}^{\max}$. We also have $J$ demand nodes, with expected demand or load $\bar{q}_{j}$ at each node $j$. The binary variable $x_{i}=1$, if generator $i$ is turned on, otherwise $x_{i}=0$. Note, the variable $p_{i}$ represents the dispatch of generator $i$.

We can expand on this model by considering uncertainty in the load $\boldsymbol{q}$ and in the capacity $\boldsymbol{p}^{\max}$. In ARO, we describe the uncertainty in the parameters with an uncertainty set that contains all scenarios against which we want to be robust. Essentially, the constraints in our problem should be satisfied for all possible realizations of $\boldsymbol{q}$ and $\boldsymbol{p}^{\max}$ in their uncertainty sets \cite{ben2009robust, bertsimas2011theory, bertsimas2022robust}. We consider uncertainty sets where the norm of the residuals from the expected load $\boldsymbol{d} = \boldsymbol{q} - \bar{\boldsymbol{q}}$ is at most $\Gamma_{q}$ and where the norm of residuals $\boldsymbol{r} = \boldsymbol{p}^{\max}-\bar{\boldsymbol{p}}^{\max}$ from the expected capacity is at most $\Delta_{p}$. 
$$
{\everymath{\displaystyle}
    \mathcal{D} = \{\|\boldsymbol{d}\|_{\ell} \leq \Gamma_{q}\}, \; \; \;
        \mathcal{U} = \{\|\boldsymbol{r}\|_{\ell} \leq \Delta_p\}.
}
$$
with $\ell \in \{1, 2, \infty \}$ corresponding to the budget, ellipsoidal and box uncertainty sets. The values of $\Gamma_{q}$ and $\Delta_{p}$ control the conservativeness of our formulation \cite{guan2013uncertainty}.

In ARO, the second-stage decisions $\boldsymbol{p}$ are functions of both uncertain parameters $\boldsymbol{d}, \boldsymbol{r}$ and the problem that minimizes the worst-case commitment and dispatch cost is
\begin{equation*}
    {\everymath{\displaystyle}
    \scalebox{0.99}{
    $
        \begin{array}{rlr}
            \min_{\boldsymbol{x}, \boldsymbol{p}} \max_{\boldsymbol{d} \in \mathcal{D}, \boldsymbol{r} \in \mathcal{U}}  & \sum_{i=1}^{I} F_{i} x_{i} + C_{i} p_{i}(\boldsymbol{d}, \boldsymbol{r}) \\
            \text{s.t.} & \sum_{i=1}^{I} p_{i}(\boldsymbol{d}, \boldsymbol{r}) \geq \sum_{j=1}^{J} (d_{j} + \bar{q}_{j}), \quad \forall\boldsymbol{d} \in \mathcal{D},\\
            & p_{i}(\boldsymbol{d}, \boldsymbol{r}) \leq (r_{i} + p^{\max}_{i}) x_{i},~ \forall i \in [I], ~\boldsymbol{r} \in \mathcal{U}, \\
            & p_{i}(\boldsymbol{d}, \boldsymbol{r}) \geq 0, \quad \forall i \in [I], \\
            & x_{i} \in \{0, 1\}, \quad \forall i \in [I]. \\
        \end{array}
    $
    }
    }
\end{equation*}

The optimization variable $p_{i} = p_{i}(\boldsymbol{d}, \boldsymbol{r})$, called a decision rule, is in fact a vector function \cite{bertsimas2022robust}. In this paper, to make the problem tractable, we restrict $p_{i}(\boldsymbol{d}, \boldsymbol{r})$ to linear functions or linear decision rules (LDR). Such a decision rule may not be optimal, because of the restriction to a certain class, but LDR have shown very good performance in practice and are optimal in many settings \cite{dehghan2017adaptive, bertsimas2022robust, jabr2017linear}. Alternatively, we can use decomposition schemes to learn the decision rule implicitly \cite{bertsimas2012adaptive, zeng2013solving}.

Using LDR, the second-stage decisions $\boldsymbol{p}$ are linear functions of the uncertain parameters $\boldsymbol{d}, \boldsymbol{r}$. So, we have an $I$-dimensional vector $\boldsymbol{u}$, an $I \times J$ matrix $\boldsymbol{V}$, an $I \times I$ matrix $\boldsymbol{Z}$ and $\boldsymbol{p}(\boldsymbol{d}, \boldsymbol{r}) = \boldsymbol{u} + \boldsymbol{V} \boldsymbol{d} + \boldsymbol{Z} \boldsymbol{r}$ or, for each $i$, $p_{i}(\boldsymbol{d}, \boldsymbol{r}) = u_{i} + \sum_{j=1}^{J} V_{ij} {d}_{j} + \sum_{k=1}^{I} Z_{ik} r_{k}$. The dispatch $\boldsymbol{p}$ contains a non-adaptive part $\boldsymbol{u}$ and an adaptive part that depends on $\boldsymbol{d}$ and $\boldsymbol{r}$. We will use these terms for the rest of the paper. If we set $\boldsymbol{V}=\boldsymbol{0}$ and $\boldsymbol{Z}=\boldsymbol{0}$, the problem becomes an RO problem. Using LDR, the previous formulation is equivalent to
\begin{equation}
    \label{centralized1_capacity}
    {\everymath{\displaystyle}
    \scalebox{0.85}{
    $
        \begin{array}{rlr}
           \min_{\boldsymbol{x}, \boldsymbol{u}, \boldsymbol{V}, \boldsymbol{Z}} \max_{\boldsymbol{d} \in \mathcal{D}, \boldsymbol{r} \in \mathcal{U}} & \sum_{i=1}^{I} F_{i} x_{i} + C_{i} (u_{i} + \sum_{j=1}^{J} V_{ij} {d}_{j} + \sum_{k=1}^{I} Z_{ik} r_{k}) \\
            \text{s.t.} & \sum_{i=1}^{I} (u_{i} + \sum_{j=1}^{J} V_{ij} {d}_{j} + \sum_{k=1}^{I} Z_{ik} r_{k}) \geq \sum_{j=1}^{J} (d_{j} + \bar{q}_{j}), \\
            & \sum_{i=1}^{I} u_{i} = \sum_{j=1}^{J} \bar{q}_{j}, \\
            & u_{i} + \sum_{j=1}^{J} V_{ij} {d}_{j} + \sum_{k=1}^{I} Z_{ik} r_{k} \leq (r_{i} + \bar{p}^{\max}_{i}) x_{i}, \quad \forall i, \\
            & u_{i} + \sum_{j=1}^{J} V_{ij} {d}_{j} + \sum_{k=1}^{I} Z_{ik} r_{k} \geq 0, \quad \forall i, \\
            & x_{i} \in \{0, 1\}, \quad \forall i, \\
        \end{array}
    $
    }
    }
\end{equation}
where constraints involving $\boldsymbol{d},  \boldsymbol{r} $ hold for all 
$\boldsymbol{d} \in \mathcal{D}$ and $\boldsymbol{r} \in \mathcal{U}$. 

Let the objective function of Problem \eqref{centralized1_capacity} be  $\xi^*$. 
The solution $\boldsymbol{x}^{*}$ and $\boldsymbol{p}^{*}(\boldsymbol{d}, \boldsymbol{r}) = \boldsymbol{u}^{*} + \boldsymbol{V}^{*} \boldsymbol{d} + \boldsymbol{Z}^{*} \boldsymbol{r}$ satisfies the constraints for any realization of $\boldsymbol{d} \in \mathcal{D}$ and of $\boldsymbol{r} \in \mathcal{U}$. For example, the second constraint ensures that the total production will be more than the total demand for all scenarios in the uncertainty sets. In addition, to make pricing more intuitive, we have included the third constraint, which ensures that the total non-adaptive dispatch $\sum_{i=1}^{I} u_{i}$ is equal to the expected load. The previous formulation is equivalent to:
\begin{equation*}
    {\everymath{\displaystyle}
    \scalebox{0.85}{
    $
        \begin{array}{rlr}
            \min_{\boldsymbol{x}, \boldsymbol{u}, \boldsymbol{V}, \boldsymbol{Z}, \eta} & \sum_{i=1}^{I} F_{i} x_{i} + \eta \\
            \text{s.t.} & \sum_{i=1}^{I} C_{i}u_{i} + \max_{\boldsymbol{d} \in \mathcal{D}}  \sum_{i=1}^{I} C_{i} \sum_{j=1}^{J} V_{ij} d_{j} + \max_{\boldsymbol{r} \in \mathcal{U}}  \sum_{i=1}^{I} C_{i} \sum_{k=1}^{I} Z_{ik} r_{k} \leq \eta, \\
            & \max_{\boldsymbol{d} \in \mathcal{D}} \sum_{j=1}^{J} (1 - \sum_{i=1}^{I} V_{ij}){d}_{j} + \max_{\boldsymbol{r} \in \mathcal{U}} \sum_{i=1}^{I} \sum_{k=1}^{I} - Z_{ik} r_{k} \leq 0, \\
            & \sum_{i=1}^{I} u_{i} = \sum_{j=1}^{J} \bar{q_{j}}, \\
            & u_{i} + \max_{\boldsymbol{d} \in \mathcal{D}} \sum_{j=1}^{J} V_{ij} {d}_{j} + \max_{\boldsymbol{r} \in \mathcal{U}} \{ - x_{i} r_{i} + \sum_{k=1}^{I} Z_{ik} r_{k} \} \leq \bar{p}^{\max}_{i} x_{i},  \forall i, \\
            & - u_{i} + \max_{\boldsymbol{d} \in \mathcal{D}} \sum_{j=1}^{J} -V_{ij} d + \max_{\boldsymbol{r} \in \mathcal{U}} \sum_{k=1}^{I} - Z_{ik} r_{k} \leq 0, \quad \forall i, \\
            & x_{i} \in \{0, 1\}, \quad \forall i, \\
        \end{array}
    $
    }
    }
\end{equation*}
or, by using the robust counterpart with $\mathcal{D} = \{\|\boldsymbol{d}\|_{\ell} \leq \Gamma_{q}\}$ and $\mathcal{U} = \{\|\boldsymbol{r}\|_{\ell} \leq \Delta_{p}\}$, where $\ell^{*}$ is the dual norm of $\ell$, and $\boldsymbol{V}_{i}$ is the $i$-th row of $\boldsymbol{V}$ and $\boldsymbol{Z}_{i}$ is the $i$-th row of $\boldsymbol{Z}$,
\begin{equation*}
    {\everymath{\displaystyle}
    \scalebox{0.85}{
    $
          \begin{array}{rlr}
             \min_{\boldsymbol{x}, \boldsymbol{u}, \boldsymbol{V}, \boldsymbol{Z}, \eta} & \sum_{i=1}^{I} F_{i} x_{i} + \eta \\
            \text{s.t.} & \sum_{i=1}^{I} C_{i}u_{i} + \Gamma_{q} \|\sum_{i=1}^{I} C_{i} \boldsymbol{V}_{i}\|_{\ell^{*}} + \Delta_{p} \|\sum_{i=1}^{I} C_{i} \boldsymbol{Z}_{i}\|_{\ell^{*}} \leq \eta, \\
             & \Gamma_{q} \|\boldsymbol{1} - \sum_{i=1}^{I} \boldsymbol{V}_{i} \|_{\ell^{*}} + \Delta_{p} \| - \sum_{i=1}^{I} \boldsymbol{Z}_{i} \|_{\ell^{*}} \leq 0, \\
             & \sum_{i=1}^{I} u_{i} = \sum_{j=1}^{J} \bar{q_{j}}, \\
             & u_{i} + \Gamma_{q} \| \boldsymbol{V}_{i} \|_{\ell^{*}} + \Delta_{p} \| x_{i} \boldsymbol{e}_{i} - \boldsymbol{Z}_{i} \|_{\ell^{*}} \leq p^{\max}_{i} x_{i}, \quad \forall i, \\
             & - u_{i} + \Gamma_{q} \| \boldsymbol{V}_{i} \|_{\ell^{*}} + \Delta_{p} \|\boldsymbol{Z}_{i} \|_{\ell^{*}} \leq 0, \forall i, \\
             & x_{i} \in \{0, 1\}, \quad \forall i. \\
        \end{array}
       $ 
        }    
        }
\end{equation*}

We are going to work with the following version of the previous formulation, because we want to use the dual problem as well. Following the example of \cite{o2005efficient}, we also set the binary variables to their optimal values $\boldsymbol{x}^{*}$.
\begin{equation}
\label{centralized2_capacity}
    {\everymath{\displaystyle}
    \scalebox{0.85}{
    $
       \begin{array}{rlrr}
            \min_{\boldsymbol{x}, \boldsymbol{u}, \boldsymbol{V}, \boldsymbol{Z}, \eta} & \sum_{i=1}^{I} F_{i} x_{i} + \eta \\
             \text{s.t.} & \sum_{i=1}^{I} C_{i}u_{i} + \Gamma_{q} \| \boldsymbol{\omega} \|_{\ell^{*}} + \Delta_{p} \| \bar{\boldsymbol{\omega}} \|_{\ell^{*}} \leq \eta, & \nu \\
             & \boldsymbol{\omega} = \sum_{i=1}^{I} C_{i} \boldsymbol{V}_{i}, \; \; \bar{\boldsymbol{\omega}} = \sum_{i=1}^{I} C_{i} \boldsymbol{Z}_{i}, & \boldsymbol{\alpha}, \bar{\boldsymbol{\alpha}} \\
             & \Gamma_{q} \|\boldsymbol{\tau}\|_{\ell^{*}} + \Delta_{p} \|\bar{\boldsymbol{\tau}}\|_{\ell^{*}} \leq 0, & \lambda \\
            & \boldsymbol{\tau} = \boldsymbol{1} - \sum_{i=1}^{I} \boldsymbol{V}_{i}, \; \; \bar{\boldsymbol{\tau}} = - \sum_{i=1}^{I} \boldsymbol{Z}_{i}, & \boldsymbol{\theta}, \bar{\boldsymbol{\theta}}\\
             & \sum_{i=1}^{I} u_{i} = \sum_{j=1}^{J} \bar{q_{j}}, & \mu \\
             & u_{i} + \Gamma_{q} \| \boldsymbol{\psi}_{i} \|_{\ell^{*}} + \Delta_{p} \| \bar{\boldsymbol{\psi}}_{i} \|_{\ell^{*}} \leq p^{\max}_{i} x_{i}, \quad \forall i, & \sigma_{i} \\
             & \boldsymbol{\psi}_{i} = \boldsymbol{V}_{i}, \; \; \bar{\boldsymbol{\psi}}_{i} = x_{i} \boldsymbol{e}_{i} - \boldsymbol{Z}_{i}, & \boldsymbol{\beta}_{i}, \bar{\boldsymbol{\beta}}_{i} \\
             & - u_{i} + \Gamma_{q} \| \boldsymbol{\phi}_{i}  \|_{\ell^{*}} + \Delta_{p} \| \bar{\boldsymbol{\phi}}_{i}  \|_{\ell^{*}} \leq 0, \quad \forall i, & \zeta_{i} \\
             & \boldsymbol{\phi}_{i} = \boldsymbol{V}_{i}, \; \; \bar{\boldsymbol{\phi}}_{i} = \boldsymbol{Z}_{i}, & \boldsymbol{\gamma}_{i}, \bar{\boldsymbol{\gamma}}_{i} \\
             & x_{i} = x_{i}^{*}, \quad \forall i, & \rho_{i} \\
             & x_{i} \geq 0, \quad \forall i, \\
        \end{array}
    $
    }
    }
\end{equation}
where the right column contains the dual variables for the corresponding constraints. The Lagrangian of the previous problem is
\begin{equation*}
    {\everymath{\displaystyle}
    \scalebox{0.90}{
    $
        \begin{array}{llr}
            & \mathcal{L} = \mu \sum_{j=1}^{J} \bar{q_{j}} + \sum_{j=1}^{J} \theta_{j} + \sum_{i=1}^{I} x_{i}^{*} \rho_{i} \\
            & + (1-\nu) \eta \\
            & + \sum_{i=1}^{I} (F_{i} - \sigma_{i} p^{\max}_{i} - \rho_{i} + \bar{\beta}_{ii}) x_{i} \\
            & + \sum_{i=1}^{I} (\nu C_{i} - \mu + \sigma_{i} - \zeta_{i}) u_{i} \\
            & + \sum_{i=1}^{I}(C_{i} \boldsymbol{\alpha} - \boldsymbol{\theta} + \boldsymbol{\beta}_{i} + \boldsymbol{\gamma}_{i})^{T} \boldsymbol{V}_{i} \\
            & + \sum_{i=1}^{I}(C_{i} \bar{\boldsymbol{\alpha}} -  \bar{\boldsymbol{\theta}} - \bar{\boldsymbol{\beta}}_{i} + \bar{\boldsymbol{\gamma}}_{i})^{T} \boldsymbol{Z}_{i} \\
            & - (\boldsymbol{\alpha}^{T} \boldsymbol{\omega} - \nu \Gamma_{q} \| \boldsymbol{\omega} \|_{\ell^{*}}) 
            - (\bar{\boldsymbol{\alpha}}^{T} \bar{\boldsymbol{\omega}} - \nu \Delta_{p} \| \bar{\boldsymbol{\omega}} \|_{\ell^{*}}) \\
            & - (\boldsymbol{\theta}^{T} \boldsymbol{\tau} - \lambda \Gamma_{q} \| \boldsymbol{\tau} \|_{\ell^{*}})
            - (\bar{\boldsymbol{\theta}}^{T} \bar{\boldsymbol{\tau}} - \lambda \Delta_{p} \| \bar{\boldsymbol{\tau}} \|_{\ell^{*}})\\
            & \sum_{i=1}^{I} - (\boldsymbol{\beta}_{i}^{T} \boldsymbol{\psi}_{i} - \sigma_{i} \Gamma_{q} \| \boldsymbol{\psi}_{i} \|_{\ell^{*}})
            +  \sum_{i=1}^{I} - (\bar{\boldsymbol{\beta}}_{i}^{T} \bar{\boldsymbol{\psi}}_{i} - \sigma_{i} \Delta_{p} \| \bar{\boldsymbol{\psi}}_{i} \|_{\ell^{*}}) \\
            & \sum_{i=1}^{I} - (\boldsymbol{\gamma}_{i}^{T} \boldsymbol{\phi}_{i} - \zeta_{i} \Gamma_{q} \| \boldsymbol{\phi}_{i} \|_{\ell^{*}})
             + \sum_{i=1}^{I} - (\bar{\boldsymbol{\gamma}}_{i}^{T} \bar{\boldsymbol{\phi}_{i}} - \zeta_{i} \Delta_{p} \| \bar{\boldsymbol{\phi}}_{i} \|_{\ell^{*}}). \\
        \end{array}
    $
    }
    }
\end{equation*}

By taking the minimum of the Lagrangian $\mathcal{L}$ over the primal variables and then maximizing over the dual variables, we obtain the dual problem \cite{bertsekas1997nonlinear}. Note that $\min_{\boldsymbol{\omega}} -(\boldsymbol{\alpha}^{T} \boldsymbol{\omega} - \nu \Gamma_{q} \| \boldsymbol{\omega} \|_{\ell^{*}}) = 0$ if $\| \boldsymbol{\alpha} \|_{\ell} \leq \Gamma_{q} \nu$, otherwise it is $-\infty$. The same holds for the other similar terms in the Lagrangian. So, at optimality, for all $i$,
\begin{equation}
    \label{dual_norm_alpha}
    {\everymath{\displaystyle}
    \scalebox{0.85}{
    $
    (\boldsymbol{\alpha}^{*})^{T} \boldsymbol{\omega}^{*} = \nu^{*} \Gamma_{q} \| \boldsymbol{\omega}^{*} \|_{\ell^{*}}, \; \;
    (\bar{\boldsymbol{\alpha}}^{*})^{T} \bar{\boldsymbol{\omega}}^{*} = \nu^{*} \Delta_{p} \| \bar{\boldsymbol{\omega}}^{*} \|_{\ell^{*}},
    $
    }
    }
\end{equation}
\begin{equation}
    \label{dual_norm_theta}
    {\everymath{\displaystyle}
    \scalebox{0.90}{
    $
    (\boldsymbol{\theta}^{*})^{T} \boldsymbol{\tau}^{*} = \lambda^{*} \Gamma_{q} \| \boldsymbol{\tau}^{*} \|_{\ell^{*}}, \; \;
    (\bar{\boldsymbol{\theta}}^{*})^{T} \bar{\boldsymbol{\tau}}^{*} = \lambda^{*} \Delta_{p} \| \bar{\boldsymbol{\tau}}^{*} \|_{\ell^{*}},
    $
    }
    }
\end{equation}
\begin{equation}
    \label{dual_norm_beta}
    {\everymath{\displaystyle}
    \scalebox{0.85}{
    $
    (\boldsymbol{\beta}_{i}^{*})^{T} \boldsymbol{\psi}_{i}^{*} = \sigma_{i}^{*} \Gamma_{q} \| \boldsymbol{\psi}_{i}^{*} \|_{\ell^{*}}, \; \;
    (\bar{\boldsymbol{\beta}}_{i}^{*})^{T} \bar{\boldsymbol{\psi}}_{i}^{*} = \sigma_{i}^{*} \Delta_{p} \| \bar{\boldsymbol{\psi}}_{i}^{*} \|_{\ell^{*}},
    $
    }
    }
\end{equation}
\begin{equation}
    \label{dual_norm_gamma}
    {\everymath{\displaystyle}
    \scalebox{0.85}{
    $
    (\boldsymbol{\gamma}_{i}^{*})^{T} \boldsymbol{\phi}_{i}^{*} = \zeta_{i}^{*} \Gamma_{q} \| \boldsymbol{\phi}_{i}^{*} \|_{\ell^{*}}, \; \;
    (\bar{\boldsymbol{\gamma}}_{i}^{*})^{T} \bar{\boldsymbol{\phi}}_{i}^{*} = \zeta_{i}^{*} \Delta_{p} \| \bar{\boldsymbol{\phi}}_{i}^{*} \|_{\ell^{*}}.
    $
    }
    }
\end{equation}

The corresponding dual problem is
\begin{equation}
\label{centralizeddual_capacity}
    {\everymath{\displaystyle}
        \scalebox{0.95}{
        $
         \begin{array}{rlr}
             \max & \mu \sum_{j=1}^{J} \bar{q_{j}} + \sum_{j=1}^{J} \theta_{j} + \sum_{i=1}^{I} x_{i}^{*} \rho_{i}, \\
             \text{s.t.} & F_{i} \geq \rho_{i} + \sigma_{i} p^{\max}_{i} - \bar{\beta}_{ii}, \quad \forall i, & x_{i} \\
             & \nu = 1, & \eta \\
             & \nu C_{i} = \mu - \sigma_{i} + \zeta_{i}, \quad \forall i, & u_{i} \\
             & C_{i} \alpha_{j} = \theta_{j} - \beta_{ij} - \gamma_{ij}, \quad \forall i,j, & V_{ij} \\
            & C_{i} \bar{\alpha}_{k} = \bar{\theta}_{k} + \bar{\beta}_{ik} - \bar{\gamma}_{ik}, \quad \forall i,k, & Z_{ik} \\
             & \| \boldsymbol{\alpha} \|_{\ell} \leq \Gamma_{q} \nu, \quad \| \bar{\boldsymbol{\alpha}} \|_{\ell} \leq \Delta_{p} \nu \\ 
             & \| \boldsymbol{\theta}\|_{\ell} \leq \Gamma_{q} \lambda, \quad \| \bar{\boldsymbol{\theta}} \|_{\ell} \leq \Delta_{p} \lambda \\ 
             & \| \boldsymbol{\beta}_{i} \|_{\ell} \leq \Gamma_{q} \sigma_{i}, \quad \| \bar{\boldsymbol{\beta}}_{i} \|_{\ell} \leq \Delta_{p} \sigma_{i}, \quad \forall i, \\ 
             & \| \boldsymbol{\gamma}_{i} \|_{\ell} \leq \Gamma_{q} \zeta_{i}, \quad \| \bar{\boldsymbol{\gamma}}_{i} \|_{\ell} \leq \Delta_{p} \zeta_{i} \quad \forall i, \\
             & \lambda, \sigma_{i}, \zeta_{i} \geq 0, \quad \forall i.
        \end{array}
        $
        }
    }
\end{equation}

We use $(\cdot)^{*}$ to denote the optimal solutions to Problem \eqref{centralized2_capacity} and Problem \eqref{centralizeddual_capacity}. We will use this notation for the rest of this work.


\section{Adaptive Pricing}
\label{sec:adamarkets}

In this section, we introduce adaptive pricing and provide its theoretical properties.

We suggest that the day-ahead payments include only the commitment and non-adaptive dispatch costs. So, the day-ahead payments reflect the cost of meeting the expected load while planning for the worst-case scenario. The adaptive part of the dispatch serves as an upper bound on the intra-day payments, which take place the following day based on the economic dispatch problems. This upper bound is also equal to the optimal intra-day payments, when the worst-case uncertainty is realized.

\begin{table*}[!htb]
\caption{Pay-as-bid and marginal pricing with uncertain load for the budget, ellipsoidal and box uncertainty sets. The uplifts in marginal pricing depend on the size $\Gamma_{q}$ of the uncertainty set.}
    {\everymath{\displaystyle}
\begin{tabular*}{\textwidth}{@{\extracolsep{\fill}}ccccc}
    \vspace{5pt}
  \textbf{Uncertainty Set} & $\mathcal{D}$ & $\max_{d \in \mathcal{D}} \sum_{j=1}^{J} V_{ij}^{*} d_{j}$ & \textbf{Pay-as-bid} & \textbf{Uniform} \\
  \hline
  \\
  \vspace{5pt}
  Budget & $\{\|\boldsymbol{d}\|_{1} \leq \Gamma_{q}\}$ & $\Gamma_{q} \| V_{i}^{*}\|_{\infty}$ & $F_{i} x_{i}^{*} + C_{i} u_{i}^{*}$ & $\mu^{*} u_{i}^{*} + \rho_{i}^{*} x_{i}^{*} + \sigma_{i}^{*} \Gamma_{q} \| V_{i}^{*}\|_{\infty} + \zeta_{i}^{*} \Gamma_{q} \| V_{i}^{*}\|_{\infty}$ \\
  \vspace{5pt}
  Ellipsoidal & $\{\|\boldsymbol{d}\|_{2} \leq \Gamma_{q}\}$ & $\Gamma_{q} \| V_{i}^{*}\|_{2}$ & $F_{i} x_{i}^{*} + C_{i} u_{i}^{*}$ & $\mu^{*} u_{i}^{*} + \rho_{i}^{*} x_{i}^{*} + \sigma_{i}^{*} \Gamma_{q} \| V_{i}^{*} \|_{2} + \zeta_{i}^{*} \Gamma_{q} \| V_{i}^{*} \|_{2}$ \\
  \vspace{5pt}
  Box & $\{ \|\boldsymbol{d}\|_{\infty} \leq \Gamma_{q}\}$ & $\Gamma_{q} \| V_{i}^{*}\|_{1}$ & $F_{i} x_{i}^{*} + C_{i} u_{i}^{*}$ & $\mu^{*} u_{i}^{*} + \rho_{i}^{*} x_{i}^{*} + \sigma_{i}^{*} \Gamma_{q} \| V_{i}^{*} \|_{1} + \zeta_{i}^{*} \Gamma_{q} \| V_{i}^{*} \|_{1}$ \\
  \\
\end{tabular*}
\label{tab:tab_unc}
}
\end{table*}

\paragraph{Pay-as-bid pricing} The day-ahead payments to generator $i$ are based on their bids $F_{i}, C_{i}$ and the non-adaptive part of the dispatch. Specifically, each generator is paid
$$
    {\everymath{\displaystyle}
    \scalebox{0.9}{
    $
        F_{i} x_{i}^{*} + C_{i} u_{i}^{*}.
    $
    }
    }
$$

\paragraph{Marginal pricing} The price for the non-adaptive dispatch is $\mu$ and each generator is paid some uplift in a discriminatory way. So, each generator $i$ is paid
$$
    {\everymath{\displaystyle}
    \scalebox{0.9}{
    $
    \begin{array}{cc}
         \vspace{5pt}
         & \mu^{*} u_{i}^{*} + (\rho_{i}^{*} - \bar{\beta}_{ii}^{*}) x_{i}^{*} \\
         \vspace{5pt}
         & + \sigma_{i}^{*} \; (\Gamma_{q} \| \boldsymbol{V}_{i}^{*}\|_{\ell^{*}} + \Delta_p \|x_{i}^{*} \boldsymbol{e}_{i} - \boldsymbol{Z}_{i}^{*}\|_{\ell^{*}}) \\
         & + \zeta_{i}^{*} \; (\Gamma_{q} \|  \boldsymbol{V}_{i}^{*} \|_{\ell^{*}} + \Delta_p \|  \boldsymbol{Z}_{i}^{*} \|_{\ell^{*}}).
    \end{array}
    $
    }
    }
$$

In contrast to deterministic pricing, we also price the uncertainty by introducing payments based on sizes $\Gamma_{q}$ and $\Delta_{p}$ of the uncertainty sets. Note that if we set $\Delta_{p} = 0$ in the uncertainty set $\mathcal{U}$, we do not consider the capacity uncertainty. Table \ref{tab:tab_unc} summarizes the payments when there is only load uncertainty or $\Delta_{p}=0$. Similarly, if we set $\Gamma_{q}=0$, we do not consider uncertainty in the load.

\subsection{Pay-As-Bid and Uniform Pricing Equivalence}

One important theoretical property of our approach is that the day-ahead pay-as-bid and marginal pricing payments are the same. This result is presented more formally in the following theorem.

\begin{thm}
The pay-as-bid payment   $F_{i} x_{i}^{*} + C_{i} u_{i}^{*}$ and  the uniform price payment 
to each generator $i$ 
$$
    {\everymath{\displaystyle}
    \scalebox{0.9}{
    $
    \begin{array}{cc}
         \vspace{5pt}
         & \mu^{*} u_{i}^{*} + (\rho_{i}^{*} - \bar{\beta}_{ii}^{*}) x_{i}^{*} \\
         \vspace{5pt}
         & + \sigma_{i}^{*} \; (\Gamma_{q} \| \boldsymbol{V}_{i}^{*}\|_{\ell^{*}} + \Delta_p \|x_{i}^{*} \boldsymbol{e}_{i} - \boldsymbol{Z}_{i}^{*}\|_{\ell^{*}}) \\
         & + \zeta_{i}^{*} \; (\Gamma_{q} \|  \boldsymbol{V}_{i}^{*} \|_{\ell^{*}} + \Delta_p \|  \boldsymbol{Z}_{i}^{*} \|_{\ell^{*}})
    \end{array}
    $
    }
    }
$$
are equal.
\end{thm}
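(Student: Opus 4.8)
The plan is to derive the identity generator by generator from the Karush--Kuhn--Tucker conditions linking Problem \eqref{centralized2_capacity} and its dual \eqref{centralizeddual_capacity}. Both are convex conic programs (a linear program when $\ell\in\{1,\infty\}$, a second-order cone program when $\ell=2$), so strong duality and complementary slackness hold — for $\ell\in\{1,\infty\}$ this is plain LP duality, and for $\ell=2$ it follows from a Slater point, which exists whenever the problem is feasible with some slack in the conic constraints (e.g.\ by enlarging $\eta$). The specific optimality conditions I would invoke are: (i) stationarity in $\eta$, which forces $\nu^{*}=1$; (ii) stationarity in the free variable $u_{i}$, which reads $\nu^{*}C_{i}=\mu^{*}-\sigma_{i}^{*}+\zeta_{i}^{*}$, hence $\mu^{*}=C_{i}+\sigma_{i}^{*}-\zeta_{i}^{*}$; (iii) complementary slackness for the capacity constraint with multiplier $\sigma_{i}$; (iv) complementary slackness for the $-u_{i}+\Gamma_{q}\|\boldsymbol{\phi}_{i}\|_{\ell^{*}}+\Delta_{p}\|\bar{\boldsymbol{\phi}}_{i}\|_{\ell^{*}}\le 0$ constraint with multiplier $\zeta_{i}$; and (v) complementary slackness for $x_{i}\ge 0$ paired with the dual constraint $F_{i}\ge\rho_{i}+\sigma_{i}p^{\max}_{i}-\bar{\beta}_{ii}$, giving $(F_{i}-\rho_{i}^{*}-\sigma_{i}^{*}p^{\max}_{i}+\bar{\beta}_{ii}^{*})\,x_{i}^{*}=0$ (true whether $x_{i}^{*}=0$ or $x_{i}^{*}=1$).

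The computation then proceeds by substitution into the uniform-price expression. Using (ii), replace $\mu^{*}u_{i}^{*}$ by $C_{i}u_{i}^{*}+\sigma_{i}^{*}u_{i}^{*}-\zeta_{i}^{*}u_{i}^{*}$; using (v), replace $(\rho_{i}^{*}-\bar{\beta}_{ii}^{*})x_{i}^{*}$ by $(F_{i}-\sigma_{i}^{*}p^{\max}_{i})x_{i}^{*}$. Recalling from \eqref{centralized2_capacity} that $\boldsymbol{\psi}_{i}^{*}=\boldsymbol{\phi}_{i}^{*}=\boldsymbol{V}_{i}^{*}$, $\bar{\boldsymbol{\psi}}_{i}^{*}=x_{i}^{*}\boldsymbol{e}_{i}-\boldsymbol{Z}_{i}^{*}$, $\bar{\boldsymbol{\phi}}_{i}^{*}=\boldsymbol{Z}_{i}^{*}$, I would then collect the terms carrying $\sigma_{i}^{*}$, obtaining $\sigma_{i}^{*}\bigl(u_{i}^{*}+\Gamma_{q}\|\boldsymbol{V}_{i}^{*}\|_{\ell^{*}}+\Delta_{p}\|x_{i}^{*}\boldsymbol{e}_{i}-\boldsymbol{Z}_{i}^{*}\|_{\ell^{*}}-p^{\max}_{i}x_{i}^{*}\bigr)$, which is $0$ by (iii); and the terms carrying $\zeta_{i}^{*}$, obtaining $\zeta_{i}^{*}\bigl(-u_{i}^{*}+\Gamma_{q}\|\boldsymbol{V}_{i}^{*}\|_{\ell^{*}}+\Delta_{p}\|\boldsymbol{Z}_{i}^{*}\|_{\ell^{*}}\bigr)$, which is $0$ by (iv). What is left is exactly $C_{i}u_{i}^{*}+F_{i}x_{i}^{*}$, the pay-as-bid payment, which proves the claim for every $i$.

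The main obstacle is not the algebra, which is a one-line regrouping once the right conditions are in hand, but making sure the KKT conditions are legitimate — i.e.\ that strong duality and complementary slackness genuinely apply — and getting the sign bookkeeping right. In particular, one must note that $\bar{\beta}_{ii}^{*}$ is the $i$-th coordinate of the multiplier vector $\bar{\boldsymbol{\beta}}_{i}^{*}$ attached to the defining equation $\bar{\boldsymbol{\psi}}_{i}=x_{i}\boldsymbol{e}_{i}-\boldsymbol{Z}_{i}$, and it enters the $x_{i}$-stationarity condition precisely because $x_{i}$ appears in $\bar{\boldsymbol{\psi}}_{i}$ only through the $\boldsymbol{e}_{i}$ term; this is the single place where the auxiliary-variable structure of \eqref{centralized2_capacity} matters. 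Notably, the norm-alignment identities \eqref{dual_norm_alpha}--\eqref{dual_norm_gamma} are \emph{not} needed for this theorem — they will be used for the intra-day payment bounds later, but the day-ahead equivalence follows purely from stationarity and complementary slackness of the inequality constraints.
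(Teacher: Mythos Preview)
Your proof is correct and follows essentially the same route as the paper: both invoke stationarity in $u_i$ (giving $\mu^{*}=C_i+\sigma_i^{*}-\zeta_i^{*}$ since $\nu^{*}=1$), complementary slackness for the $\sigma_i$- and $\zeta_i$-constraints, and the $x_i$-slackness condition $(F_i-\rho_i^{*}-\sigma_i^{*}p^{\max}_i+\bar{\beta}_{ii}^{*})x_i^{*}=0$, then regroup. The only cosmetic difference is direction---the paper starts from $F_ix_i^{*}+C_iu_i^{*}$ and expands toward the uniform-price expression, whereas you start from the uniform-price side and collapse---and you add the (welcome) justification of strong duality that the paper leaves implicit; your observation that identities \eqref{dual_norm_alpha}--\eqref{dual_norm_gamma} are unnecessary here is also accurate.
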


\begin{proof}
    Using complementary slackness between Problems \eqref{centralized2_capacity} and \eqref{centralizeddual_capacity}, for all $i \in [I]$,
$$
        \scalebox{0.9}{
        $
        F_{i} x_{i}^{*} = \rho_{i}^{*} x_{i}^{*} + \sigma_{i}^{*} p^{\max}_{i} x_{i}^{*} - \bar{\beta}_{ii}^{*} x_{i}^{*},
        $
        }
$$
$$
        \scalebox{0.9}{
        $
         C_{i} u_{i}^{*} = \mu^{*} u_{i}^{*} - \sigma_{i}^{*} u_{i}^{*} + \zeta_{i}^{*} u_{i}^{*},
        $
        }
$$
$$      
        \scalebox{0.9}{
        $
        \zeta_{i}^{*} u_{i}^{*} - \zeta_{i}^{*} \Gamma_{q} \| \boldsymbol{\phi}_{i}^{*}  \|_{\ell^{*}} - \zeta_{i}^{*} \Delta_{p} \| \bar{\boldsymbol{\phi}}_{i}^{*}  \|_{\ell^{*}} = 0,
        $
        }
$$
$$
        \scalebox{0.9}{
        $
        \sigma_{i}^{*} p^{\max}_{i} x_{i}^{*} - \sigma_{i}^{*} u_{i}^{*} - \sigma_{i}^{*} \Gamma_{q} \| \boldsymbol{\psi}_{i}^{*} \|_{\ell^{*}} - \sigma_{i}^{*} \Delta_{p} \| \bar{\boldsymbol{\psi}}_{i}^{*}  \|_{\ell^{*}}  = 0.
        $
        }
$$

    So, for all $i \in [I]$,

$$
{\everymath{\displaystyle}
        \scalebox{0.9}{
        $
        \begin{array}{l}
            \vspace{10pt}
            F_{i}x_{i}^{*} + C_{i} u_{i}^{*} \\
            \vspace{10pt}
             =  \rho_{i}^{*} x_{i}^{*} + \sigma_{i}^{*} p^{\max}_{i} x_{i}^{*} - \bar{\beta}_{ii}^{*} x_{i}^{*}
             + \mu^{*} u_{i}^{*} - \sigma_{i}^{*} u_{i}^{*} + \zeta_{i}^{*} u_{i}^{*} \\
             \vspace{10pt}
             = (\rho_{i}^{*} - \bar{\beta}_{ii}^{*}) x_{i}^{*} + \mu u_{i}^{*}
             + \sigma_{i}^{*} (p^{\max}_{i}x_{i}^{*} - u_{i}^{*})  + \zeta_{i}^{*} u_{i}^{*} \\
             \vspace{10pt}
             = (\rho_{i}^{*} - \bar{\beta}_{ii}^{*}) x_{i}^{*} + \mu u_{i}^{*} \\
             \vspace{10pt}
             + \sigma_{i}^{*} \Gamma_{q} \| \boldsymbol{\psi}_{i}^{*}  \|_{\ell^{*}} + \sigma_{i}^{*} \Delta_{p} \| \bar{\boldsymbol{\psi}}_{i}^{*}  \|_{\ell^{*}} 
             + \zeta_{i}^{*} \Gamma_{q} \| \boldsymbol{\phi}_{i}^{*}  \|_{\ell^{*}} + \zeta_{i}^{*} \Delta_{p} \| \bar{\boldsymbol{\phi}}_{i}^{*}  \|_{\ell^{*}}  \\
             \vspace{10pt}
             = \mu^{*} u_{i}^{*} + (\rho_{i}^{*} - \bar{\beta}_{ii}^{*}) x_{i}^{*} \\
             + \sigma_{i}^{*} \Gamma_{q} \| \boldsymbol{V}_{i}^{*}\|_{\ell^{*}} + \sigma_{i}^{*} \Delta_{p} \| x_{i}^{*} \boldsymbol{e}_{i} - \boldsymbol{Z}_{i}^{*}\|_{\ell^{*}}
             + \zeta_{i}^{*} \Gamma_{q} \| \boldsymbol{V}_{i}^{*}\|_{\ell^{*}} + \zeta_{i}^{*} \Delta_{p} \| \boldsymbol{Z}_{i}^{*}\|_{\ell^{*}}.
        \end{array}
        $
    }
}
$$

The last equality follows from the constraints $\boldsymbol{\psi}_{i} = \boldsymbol{V}_{i}$, $\boldsymbol{\phi}_{i} = \boldsymbol{V}_{i}$, $\bar{\boldsymbol{\psi}}_{i} = x_{i}\boldsymbol{e}_{i} - \boldsymbol{Z}_{i}$ and $\bar{\boldsymbol{\phi}}_{i} = \boldsymbol{Z}_{i}$ of Problem \eqref{centralized2_capacity}. \end{proof}

\subsection{Total Worst-Case Cost Equivalence}

In the previous section, we considered only the day-ahead payments. In this section, we show that when the worst-case uncertainty is realized, the pay-as-bid and marginal pricing payments are the same.
Let $\boldsymbol{d}^{*}= \arg \max_{\boldsymbol{d} \in \mathcal{D}}  \sum_{i=1}^{I} C_{i} \sum_{j=1}^{J} V_{ij}^* d_{j}$ and $\boldsymbol{r}^{*}= \arg \max_{\boldsymbol{r} \in \mathcal{U}}  \sum_{i=1}^{I} C_{i} \sum_{k=1}^{I} Z_{ik}^{*} r_{k}$
be the worst case realization of the residual load $\boldsymbol{d}$ and the residual capacity $\boldsymbol{r}$ respectively for the objective function of Problem \eqref{centralized1_capacity}.

\begin{thm}
\label{thm:thm2_capacity}
If the worst-case uncertainty $\boldsymbol{d}^{*}, \boldsymbol{r}^{*}$ is realized, then the total pay-as-bid payment to generator $i$
$$
       f_i:=F_{i} x_{i}^{*} + C_{i} u_{i}^{*} + C_{i} \sum_{j=1}^{J} V_{ij}^{*} d_{j}^{*} + C_{i} \sum_{k=1}^{I} Z_{ik}^{*} r_{k}^{*},
$$
and the uniform price payment to generator $i$ 
$$
       g_i:= \rho_{i}^{*} x_{i}^{*} + \mu^{*} u_{i}^{*} + \sum_{j=1}^{J} \theta_{j}^{*} V_{ij}^{*} + \sum_{k=1}^{I} \bar{\theta}_{k}^{*} Z_{ik}^{*}.
$$
are equal and satisfy $\sum_{i=1}^{I} f_i=\sum_{i=1}^{I} g_i = \xi^*$, the optimal solution value of Problem \eqref{centralized1_capacity}.
\end{thm}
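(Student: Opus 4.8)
The plan is to combine Theorem~1 with the norm-duality identities \eqref{dual_norm_alpha}--\eqref{dual_norm_gamma}, the dual feasibility equalities of Problem \eqref{centralizeddual_capacity}, and strong duality between \eqref{centralized2_capacity} and \eqref{centralizeddual_capacity}.

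\emph{Step 1: showing $f_i = g_i$ for each $i$.} From the dual equalities $C_i \alpha_j^* = \theta_j^* - \beta_{ij}^* - \gamma_{ij}^*$ and $C_i \bar{\alpha}_k^* = \bar{\theta}_k^* + \bar{\beta}_{ik}^* - \bar{\gamma}_{ik}^*$, the adaptive part of $g_i$ rewrites as $C_i \sum_j \alpha_j^* V_{ij}^* + C_i \sum_k \bar{\alpha}_k^* Z_{ik}^* + \sum_j (\beta_{ij}^* + \gamma_{ij}^*) V_{ij}^* + \sum_k (\bar{\gamma}_{ik}^* - \bar{\beta}_{ik}^*) Z_{ik}^*$. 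Substituting $\boldsymbol{\psi}_i^* = \boldsymbol{\phi}_i^* = \boldsymbol{V}_i^*$, $\bar{\boldsymbol{\phi}}_i^* = \boldsymbol{Z}_i^*$, $\bar{\boldsymbol{\psi}}_i^* = x_i^* \boldsymbol{e}_i - \boldsymbol{Z}_i^*$ and invoking \eqref{dual_norm_beta}--\eqref{dual_norm_gamma} turns these cross terms into $\sum_j \beta_{ij}^* V_{ij}^* = \sigma_i^* \Gamma_q \| \boldsymbol{V}_i^* \|_{\ell^*}$, $\sum_j \gamma_{ij}^* V_{ij}^* = \zeta_i^* \Gamma_q \| \boldsymbol{V}_i^* \|_{\ell^*}$, $\sum_k \bar{\gamma}_{ik}^* Z_{ik}^* = \zeta_i^* \Delta_p \| \boldsymbol{Z}_i^* \|_{\ell^*}$, and $\sum_k \bar{\beta}_{ik}^* Z_{ik}^* = x_i^* \bar{\beta}_{ii}^* - \sigma_i^* \Delta_p \| x_i^* \boldsymbol{e}_i - \boldsymbol{Z}_i^* \|_{\ell^*}$. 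Collecting terms, $-x_i^* \bar{\beta}_{ii}^*$ merges with $\rho_i^* x_i^*$, the four norm terms that remain are exactly those on the right side of Theorem~1, and hence $g_i = F_i x_i^* + C_i u_i^* + C_i \sum_j \alpha_j^* V_{ij}^* + C_i \sum_k \bar{\alpha}_k^* Z_{ik}^*$. Finally, since $\nu^* = 1$, dual feasibility gives $\| \boldsymbol{\alpha}^* \|_\ell \leq \Gamma_q$ and $\| \bar{\boldsymbol{\alpha}}^* \|_\ell \leq \Delta_p$, so $\boldsymbol{\alpha}^* \in \mathcal{D}$, $\bar{\boldsymbol{\alpha}}^* \in \mathcal{U}$, while \eqref{dual_norm_alpha} gives $(\boldsymbol{\alpha}^*)^T \boldsymbol{\omega}^* = \Gamma_q \| \boldsymbol{\omega}^* \|_{\ell^*} = \max_{\boldsymbol{d} \in \mathcal{D}} (\boldsymbol{\omega}^*)^T \boldsymbol{d}$ and similarly for $\bar{\boldsymbol{\alpha}}^*$; thus $\boldsymbol{\alpha}^*$ and $\bar{\boldsymbol{\alpha}}^*$ attain the maxima defining $\boldsymbol{d}^*$ and $\boldsymbol{r}^*$ and may be taken equal to them, giving $g_i = f_i$.

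\emph{Step 2: showing $\sum_i f_i = \sum_i g_i = \xi^*$.} Because $\boldsymbol{d}^*, \boldsymbol{r}^*$ maximize the second-stage objective over $\mathcal{D} \times \mathcal{U}$, we have $\sum_i C_i \sum_j V_{ij}^* d_j^* = \Gamma_q \| \boldsymbol{\omega}^* \|_{\ell^*}$ and $\sum_i C_i \sum_k Z_{ik}^* r_k^* = \Delta_p \| \bar{\boldsymbol{\omega}}^* \|_{\ell^*}$, so $\sum_i f_i = \sum_i F_i x_i^* + \sum_i C_i u_i^* + \Gamma_q \| \boldsymbol{\omega}^* \|_{\ell^*} + \Delta_p \| \bar{\boldsymbol{\omega}}^* \|_{\ell^*}$, which is the optimal value of Problem \eqref{centralized2_capacity} and hence equals $\xi^*$ since $\boldsymbol{x}^*$ is optimal for \eqref{centralized1_capacity}. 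Summing $g_i$ and using $\sum_i u_i^* = \sum_j \bar{q}_j$, $\sum_i V_{ij}^* = 1 - \tau_j^*$, $\sum_i Z_{ik}^* = -\bar{\tau}_k^*$, and \eqref{dual_norm_theta}, the adaptive terms collapse to $\sum_j \theta_j^* - \lambda^*(\Gamma_q \| \boldsymbol{\tau}^* \|_{\ell^*} + \Delta_p \| \bar{\boldsymbol{\tau}}^* \|_{\ell^*})$; the constraint $\Gamma_q \| \boldsymbol{\tau}^* \|_{\ell^*} + \Delta_p \| \bar{\boldsymbol{\tau}}^* \|_{\ell^*} \leq 0$ together with nonnegativity of norms forces this bracket to vanish, leaving $\sum_i g_i = \mu^* \sum_j \bar{q}_j + \sum_j \theta_j^* + \sum_i x_i^* \rho_i^*$, the dual objective of \eqref{centralizeddual_capacity} at optimality, which equals $\xi^*$ by strong duality (consistently, $\sum_i g_i = \sum_i f_i$ already follows from Step~1).

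The step I expect to be the main obstacle is the bookkeeping in Step~1: pairing each dual equality with the correct identity among \eqref{dual_norm_alpha}--\eqref{dual_norm_gamma}, tracking the sign from $\bar{\boldsymbol{\psi}}_i = x_i \boldsymbol{e}_i - \boldsymbol{Z}_i$ that produces the $-\bar{\beta}_{ii}^* x_i^*$ absorbed into $\rho_i^* x_i^*$, and --- conceptually --- recognizing that the multipliers $\boldsymbol{\alpha}^*, \bar{\boldsymbol{\alpha}}^*$ of the definitional constraints $\boldsymbol{\omega} = \sum_i C_i \boldsymbol{V}_i$, $\bar{\boldsymbol{\omega}} = \sum_i C_i \boldsymbol{Z}_i$ are themselves worst-case scenarios, which is exactly what makes the adaptive payment $C_i \sum_j V_{ij}^* d_j^*$ match the dual-price payment $\sum_j \theta_j^* V_{ij}^*$ up to the uplifts already reconciled by Theorem~1. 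The remaining steps are routine applications of Theorem~1 and strong duality.
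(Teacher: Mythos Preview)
Your proposal is correct and follows essentially the same route as the paper: you combine the dual feasibility equalities with the norm-duality identities \eqref{dual_norm_alpha}--\eqref{dual_norm_gamma} and the complementary slackness conditions underlying Theorem~1 to reduce $g_i$ to $F_i x_i^* + C_i u_i^* + C_i (\boldsymbol{\alpha}^*)^T \boldsymbol{V}_i^* + C_i (\bar{\boldsymbol{\alpha}}^*)^T \boldsymbol{Z}_i^*$, and then identify $\boldsymbol{\alpha}^*,\bar{\boldsymbol{\alpha}}^*$ as worst-case realizations, exactly as the paper does. Your Step~2 actually supplies more detail than the paper (which simply asserts $\sum_i f_i = \sum_i g_i = \xi^*$ at the end), but the underlying argument is the same.
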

\begin{proof}
     By complementary slackness between Problems \eqref{centralized2_capacity} and \eqref{centralizeddual_capacity}, for all $i \in [I], j \in [J]$, we have
    $$
        {\everymath{\displaystyle}
                \scalebox{0.9}{
                $
                \begin{array}{l}
                    \vspace{10pt}
                    C_{i} \alpha_{j}^{*} V_{ij}^{*} = (\theta_{j}^{*} - \beta_{ij}^{*} - \gamma_{ij}^{*}) V_{ij}^{*}, \\
                    \vspace{10pt}
                    C_{i} \bar{\alpha}_{k}^{*} Z_{ik}^{*} = (\theta_{k}^{*} + \beta_{ik}^{*} - \gamma_{ik}^{*}) Z_{ik}^{*}, \\
                    \vspace{10pt}
                    C_{i} \sum_{j=1}^{J} \alpha_{j}^{*} V_{ij}^{*} = \sum_{j=1}^{J} (\theta_{j}^{*} - \beta_{ij}^{*} - \gamma_{ij}^{*}) V_{ij}^{*}, \\
                    C_{i} \sum_{k=1}^{I} \bar{\alpha}_{k}^{*} Z_{ik}^{*} = \sum_{k=1}^{I} (\theta_{k}^{*} + \beta_{ik}^{*} - \gamma_{ik}^{*}) Z_{ik}^{*}.
                \end{array}
                $
                }
        }
    $$
    Also, using equations \eqref{dual_norm_beta} and \eqref{dual_norm_gamma} and the constraints $\boldsymbol{\psi}_{i} = \boldsymbol{V}_{i}$, $\boldsymbol{\phi}_{i} = \boldsymbol{V}_{i}$, $\bar{\boldsymbol{\psi}}_{i} = x_{i}\boldsymbol{e}_{i} - \boldsymbol{Z}_{i}$ and $\bar{\boldsymbol{\phi}}_{i} = \boldsymbol{Z}_{i}$ of Problem \eqref{centralized2_capacity}, for all $i$,
    $$
        {\everymath{\displaystyle}
                \scalebox{0.9}{
                $
                \begin{array}{l}
                    \vspace{10pt}
                    (\boldsymbol{\beta}_{i}^{*})^{T} \boldsymbol{V}_{i}^{*} = (\boldsymbol{\beta}_{i}^{*})^{T} \boldsymbol{\psi}_{i}^{*} = \sigma_{i}^{*} \Gamma_{q} \| \boldsymbol{\psi}_{i}^{*}  \|_{\ell^{*}}, \\
                    \vspace{10pt}
                    (\boldsymbol{\gamma}_{i}^{*})^{T} \boldsymbol{V}_{i} = (\boldsymbol{\gamma}_{i}^{*})^{T} \boldsymbol{\phi}_{i}^{*} = \zeta_{i}^{*} \Gamma_{q} \| \boldsymbol{\phi}_{i}^{*}  \|_{\ell^{*}}, \\
                    \vspace{10pt}
                    (\bar{\boldsymbol{\beta}}_{i}^{*})^{T} (x_{i}^{*} \boldsymbol{e}_{i} - \boldsymbol{Z}_{i}^{*}) = (\bar{\boldsymbol{\beta}}_{i}^{*})^{T} \bar{\boldsymbol{\psi}}_{i}^{*} = \sigma_{i}^{*} \Delta_{p} \| \bar{\boldsymbol{\psi}}_{i}^{*}  \|_{\ell^{*}}, \\
                    (\bar{\boldsymbol{\gamma}}_{i}^{*})^{T} \boldsymbol{Z}_{i} = (\bar{\boldsymbol{\gamma}}_{i}^{*})^{T} \bar{\boldsymbol{\phi}}_{i}^{*} = \zeta_{i}^{*} \Delta_{p} \| \bar{\boldsymbol{\phi}}_{i}^{*}  \|_{\ell^{*}}.
                \end{array}
                $
                }
        }
    $$
  
    So, for all $i \in [I]$, using the same complementary slackness conditions as Theorem 1,
   $$ 
       {\everymath{\displaystyle}
       \scalebox{0.9}{
       $
            \begin{array}{l}
                F_{i} x_{i}^{*} + C_{i} u_{i}^{*} + C_{i} \sum_{j=1}^{J} \alpha_{j}^{*} V_{ij}^{*} + C_{i} \sum_{k=1}^{I} \bar{\alpha}_{k}^{*} Z_{ik}^{*} \\
                =  \rho_{i}x_{i}^{*} + \sigma_{i}^{*} p^{\max}_{i} x_{i}^{*} - \bar{\beta}_{ii}^{*} x_{i}^{*} + \mu^{*} u_{i}^{*} - \sigma_{i}^{*} u_{i}^{*} + \zeta_{i}^{*} u_{i}^{*} \\
                + \sum_{j=1}^{J} (\theta_{j}^{*} - \beta_{ij}^{*} - \gamma_{ij}^{*}) V_{ij}^{*} \\
                + \sum_{k=1}^{I} (\bar{\theta}_{k}^{*} + \bar{\beta}_{ik}^{*} - \bar{\gamma}_{ik}^{*}) Z_{ik}^{*} \\
                = \rho_{i}^{*} x_{i}^{*} + \mu^{*} u_{i}^{*} + \sum_{j=1}^{J} \theta_{j}^{*} V_{ij}^{*} \\
                + (\zeta_{i}^{*} u_{i}^{*} - \zeta_{i}^{*} \Gamma_{q} \| \boldsymbol{\phi}_{i}^{*}  \|_{\ell^{*}} - \zeta_{i}^{*} \Delta_{p} \| \bar{\boldsymbol{\phi}}_{i}^{*}  \|_{\ell^{*}}) \\
                + (\sigma_{i}^{*} p^{\max}_{i} x_{i}^{*} - \sigma_{i}^{*} u_{i}^{*} - \sigma_{i}^{*} \Gamma_{q} \| \boldsymbol{\psi}_{i}^{*}  \|_{\ell^{*}} - \sigma_{i}^{*} \Delta_{p} \| \bar{\boldsymbol{\psi}}_{i}^{*}  \|_{\ell^{*}}) \\
                = \rho_{i}^{*} x_{i}^{*} + \mu^{*} u_{i}^{*} + \sum_{j=1}^{J} \theta_{j}^{*} V_{ij}^{*} + \sum_{k=1}^{I} \bar{\theta}_{k}^{*} Z_{ik}^{*} = g_{i}.
            \end{array}
        $
        }
        }
    $$
    Next, we show that
    $$
        {\everymath{\displaystyle}
        \scalebox{0.9}{
        $
        \begin{array}{l}
            F_{i} x_{i}^{*} + C_{i} u_{i}^{*} + C_{i} \sum_{j=1}^{J} \alpha_{j}^{*} V_{ij}^{*} + C_{i} \sum_{k=1}^{I} \bar{\alpha}_{k}^{*} Z_{ik}^{*} \\
            = F_{i} x_{i}^{*} + C_{i} u_{i}^{*} + C_{i} \sum_{j=1}^{J} V_{ij}^{*} d_{j}^{*} + C_{i} \sum_{k=1}^{I} Z_{ik}^{*} r_{k}^{*} = f_{i}.
        \end{array}
        $
        }
        }
    $$
    Using equation \eqref{dual_norm_alpha} and the constraints $\boldsymbol{\omega} = \sum_{i=1}^{I} C_{i} \boldsymbol{V}_{i}$ and $\bar{\boldsymbol{\omega}} = \sum_{i=1}^{I} C_{i} \boldsymbol{Z}_{i}$ of Problem \eqref{centralized2_capacity} and $\nu=1$ of Problem \eqref{centralizeddual_capacity},
    $$
        {\everymath{\displaystyle}
            \scalebox{0.9}{
            $
            \begin{array}{l}
                 \sum_{i=1}^{I} C_{i} \sum_{j=1}^{J} \alpha_{j}^{*} V_{ij}^{*} = (\boldsymbol{\alpha}^{*})^{T} \boldsymbol{\omega}^{*} \\
                 = \nu^{*} \Gamma_{q} \| \boldsymbol{\omega}^{*} \|_{\ell^{*}} = \Gamma_{q} \|\boldsymbol{\omega}^{*}  \|_{\ell^{*}} \\
                 = \Gamma_{q} \|\sum_{i=1}^{I} C_{i} \boldsymbol{V}_{i}^{*} \|_{\ell^{*}} = \max_{\boldsymbol{d} \in \mathcal{D}} \sum_{i=1}^{I} C_{i} \sum_{j=1}^{J} V_{ij}^{*} d_{j} \\
                 = \sum_{i=1}^{I} C_{i} \sum_{j=1}^{J} V_{ij}^{*} d_{j}^{*},     
            \end{array}
            $
            }
        }
    $$
    and
    $$
        {\everymath{\displaystyle}
            \scalebox{0.9}{
            $
            \begin{array}{l}
                 \sum_{i=1}^{I} C_{i} \sum_{k=1}^{I} \bar{\alpha_{k}}^{*} Z_{ik}^{*} = (\bar{\boldsymbol{\alpha}}^{*})^{T} \bar{\boldsymbol{\omega}}^{*} \\
                 = \nu^{*} \Delta_{p} \| \bar{\boldsymbol{\omega}}^{*} \|_{\ell^{*}} = \Delta_{p} \|\bar{\boldsymbol{\omega}}^{*}  \|_{\ell^{*}} \\
                 = \Delta_{p} \|\sum_{i=1}^{I} C_{i} \boldsymbol{Z}_{i}^{*} \|_{\ell^{*}} = \max_{\boldsymbol{r} \in \mathcal{U}} \sum_{i=1}^{I} C_{i} \sum_{k=1}^{I} Z_{ik}^{*} r_{k} \\
                 = \sum_{i=1}^{I} C_{i} \sum_{k=1}^{I} Z_{ik}^{*} r_{k}^{*}. 
            \end{array}
            $
            }
            }
    $$
    
Then, $\boldsymbol{\alpha}$ is the worst-case load uncertainty, because $\boldsymbol{\alpha}^{*} = \arg \max_{\boldsymbol{d} \in \mathcal{D}}  \sum_{i=1}^{I} C_{i} \sum_{j=1}^{J} V_{ij}^* d_{j}$ and $\boldsymbol{\alpha}^{*}$ is in the uncertainty set $\mathcal{D}$ or $\| \boldsymbol{\alpha}^{*} \|_{\ell} \leq \Gamma_{q}$ by the constraints of Problem \eqref{centralizeddual_capacity}. Similarly, $\bar{\boldsymbol{\alpha}}$ is the worst-case capacity uncertainty, because $\bar{\boldsymbol{\alpha}}^{*} = \arg \max_{\boldsymbol{r} \in \mathcal{U}} \sum_{i=1}^{I} C_{i} \sum_{k=1}^{I} Z_{ik}^{*} r_{k}$. Note, $\boldsymbol{d}^{*}$ and $\boldsymbol{r}^{*}$ may not be unique.

So, $f_{i} = g_{i}$ for all $i$ and $\sum_{i=1}^{I} f_i=\sum_{i=1}^{I} g_i = \xi^*$. \end{proof}

\subsection{Absence of Self-Scheduling}

In this section, we show that when the worst-case uncertainty is realized, the generators are indifferent between the market schedule and their optimal dispatch, so they do not have an incentive to self-schedule. The ISO solves the \say{centralized} Problem \eqref{centralized2_capacity} and sets prices $\mu$ for the non-adaptive dispatch, $\boldsymbol{\theta}$ and $\bar{\boldsymbol{\theta}}$ for the adaptive dispatch, and $\boldsymbol{\rho}$ for the commitment. Also, each generator $i$ has commitment costs $F_{i}$ and dispatch costs $C_{i}$. The cost of the adaptive dispatch is a linear function of $\boldsymbol{d}^{*}$ and $\boldsymbol{r}^{*}$, as defined earlier and specified by the ISO. So, generator $i$ has revenue $\mu u_{i} + \sum_{j=1}^{J} \theta_{j} V_{ij} + \sum_{k=1}^{I} \bar{\theta}_{k} Z_{ik} + \rho_{i} x_{i}$, while it has a cost $F_{i} x_{i} + C_{i} u_{i} + C_{i} \sum_{j=1}^{J} V_{ij} d_{j}^{*} + C_{i} \sum_{k=1}^{I} Z_{ik} r_{k}^{*}$. Each generator $i$ decides if they will self-schedule by maximizing the difference between the revenue and the costs. They solve the following \say{decentralized} problem which maximizes their individual profit:
\begin{equation}
    \label{decentralized_capacity}
    {\everymath{\displaystyle}
    \scalebox{0.95}{
    $
        \begin{array}{rl}
            \max_{\boldsymbol{x}, \boldsymbol{u}, \boldsymbol{V}, \boldsymbol{Z}} & \mu u_{i} + \sum_{j=1}^{J} \theta_{j} V_{ij} + \sum_{k=1}^{I} \bar{\theta}_{k} Z_{ik} + \rho_{i} x_{i} \\
            & - (F_{i} x_{i} + C_{i} u_{i} + C_{i} \sum_{j=1}^{J} V_{ij} d_{j}^{*} + C_{i} \sum_{k=1}^{I} Z_{ik} r_{k}^{*}) \\
            \text{s.t.} & u_{i} + \Gamma_{q} \|\boldsymbol{V}_{i} \|_{\ell^{*}} + \Delta_{p} \| x_{i} \boldsymbol{e}_{i} - \boldsymbol{Z}_{i} \|_{\ell^{*}} \leq p^{\max}_{i} x_{i}, \\
            & u_{i} - \Gamma_{q} \|\boldsymbol{V}_{i} \|_{\ell^{*}} - \Delta_{p} \|\boldsymbol{Z}_{i} \|_{\ell^{*}}  \geq 0, \\
            & x_{i} \in \{0, 1\}, \\
        \end{array}
    $
    }
    }
\end{equation}
The constraints are the robust counterparts of the capacity and non-negativity constraints for each $i$, which means that the dispatch of the decentralized problem will be non-negative and less than the maximum capacity of generator $i$ for all $\boldsymbol{d} \in \mathcal{D}$ and $\boldsymbol{r} \in \mathcal{U}$.


Let $h_{i}(x_{i}, u_{i}, \boldsymbol{V}_{i}, \boldsymbol{Z}_{i})$ be the objective of the decentralized problem 
\eqref{decentralized_capacity} for commitment $x_{i}$ and dispatch $u_{i}, \boldsymbol{V}_{i}, \boldsymbol{Z}_{i}$. Also, let $\boldsymbol{x}^{*}, \boldsymbol{u}^{*}, \boldsymbol{V}^{*}, \boldsymbol{Z}^{*}$ be an optimal solution to the centralized problem \eqref{centralized2_capacity} and $x_{i}^{**}, u_{i}^{**}, \boldsymbol{V}_{i}^{**}, \boldsymbol{Z}_{i}^{**}$ be a solution to the decentralized problem \eqref{decentralized_capacity} for generator $i$. Generator $i$ has no incentive to self-schedule only if
$$    {\everymath{\displaystyle}
        h_{i}(x_{i}^{**}, u_{i}^{**}, \boldsymbol{V}_{i}^{**}, \boldsymbol{Z}_{i}^{**}) \leq h_{i}(x_{i}^{*}, u_{i}^{*}, \boldsymbol{V}_{i}^{*}, \boldsymbol{Z}_{i}^{*}).
    }
$$

\begin{thm}
    Suppose generator $i$ solves the decentralized problem \eqref{decentralized_capacity} using the prices of the centralized problem \eqref{centralized2_capacity}. Then, they cannot obtain a schedule giving greater profit than the centralized market schedule or
    $$    {\everymath{\displaystyle}
        h_{i}(x_{i}^{**}, u_{i}^{**}, \boldsymbol{V}_{i}^{**}, \boldsymbol{Z}_{i}^{**}) \leq h_{i}(x_{i}^{*}, u_{i}^{*}, \boldsymbol{V}_{i}^{*}, \boldsymbol{Z}_{i}^{*}).
    }
$$
\end{thm}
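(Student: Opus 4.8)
The plan is to show that the dual prices of Problem \eqref{centralizeddual_capacity} support the centralized solution of Problem \eqref{centralized2_capacity}, so that the decentralized objective $h_i$ is nonpositive everywhere on the feasible region of \eqref{decentralized_capacity} while attaining the value $0$ at the centralized point. First I would observe that $(x_i^*,u_i^*,\boldsymbol V_i^*,\boldsymbol Z_i^*)$ is itself feasible for \eqref{decentralized_capacity}, since the two decentralized constraints are exactly the per-generator robust capacity and non-negativity constraints of \eqref{centralized2_capacity}. Moreover, evaluating $h_i$ at that point gives $g_i-f_i$ in the notation of Theorem \ref{thm:thm2_capacity} (taking the canonical worst case $\boldsymbol d^*=\boldsymbol\alpha^*$, $\boldsymbol r^*=\bar{\boldsymbol\alpha}^*$ identified there), which equals $0$. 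Hence it suffices to prove $h_i(x_i,u_i,\boldsymbol V_i,\boldsymbol Z_i)\le 0$ for every feasible $(x_i,u_i,\boldsymbol V_i,\boldsymbol Z_i)$, and then $h_i(x_i^{**},u_i^{**},\boldsymbol V_i^{**},\boldsymbol Z_i^{**})\le 0 = h_i(x_i^*,u_i^*,\boldsymbol V_i^*,\boldsymbol Z_i^*)$ follows (in fact with equality, so the generator is indifferent).

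Next I would expand $h_i$ grouped by variable and substitute the dual equality constraints of \eqref{centralizeddual_capacity}: $\nu^*=1$, $C_i=\mu^*-\sigma_i^*+\zeta_i^*$, $C_i\alpha_j^*=\theta_j^*-\beta_{ij}^*-\gamma_{ij}^*$, $C_i\bar\alpha_k^*=\bar\theta_k^*+\bar\beta_{ik}^*-\bar\gamma_{ik}^*$, together with $\boldsymbol d^*=\boldsymbol\alpha^*$ and $\boldsymbol r^*=\bar{\boldsymbol\alpha}^*$ from Theorem \ref{thm:thm2_capacity}. This eliminates $C_i$ and the worst-case realizations, turning the $V$- and $Z$-coefficients into $\beta_{ij}^*+\gamma_{ij}^*$ and $-\bar\beta_{ik}^*+\bar\gamma_{ik}^*$. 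Using the dual inequality $F_i\ge\rho_i^*+\sigma_i^* p^{\max}_i-\bar\beta_{ii}^*$ and $x_i\ge 0$ to bound the $x_i$-term, and absorbing the $\bar\beta_{ii}^* x_i$ contribution into $(\bar{\boldsymbol\beta}_i^*)^T(x_i\boldsymbol e_i-\boldsymbol Z_i)$, one obtains
$$
h_i \le \sigma_i^*(u_i - p^{\max}_i x_i) - \zeta_i^* u_i + (\boldsymbol\beta_i^*+\boldsymbol\gamma_i^*)^T\boldsymbol V_i + (\bar{\boldsymbol\beta}_i^*)^T(x_i\boldsymbol e_i-\boldsymbol Z_i) + (\bar{\boldsymbol\gamma}_i^*)^T\boldsymbol Z_i.
$$

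Finally I would bound each term. The decentralized capacity constraint gives $u_i-p^{\max}_i x_i \le -\Gamma_q\|\boldsymbol V_i\|_{\ell^*}-\Delta_p\|x_i\boldsymbol e_i-\boldsymbol Z_i\|_{\ell^*}$, so (using $\sigma_i^*\ge 0$) $\sigma_i^*(u_i-p^{\max}_i x_i)\le -\sigma_i^*\Gamma_q\|\boldsymbol V_i\|_{\ell^*}-\sigma_i^*\Delta_p\|x_i\boldsymbol e_i-\boldsymbol Z_i\|_{\ell^*}$; the non-negativity constraint gives $-\zeta_i^* u_i\le -\zeta_i^*\Gamma_q\|\boldsymbol V_i\|_{\ell^*}-\zeta_i^*\Delta_p\|\boldsymbol Z_i\|_{\ell^*}$. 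For the bilinear terms I apply Hölder's inequality with the conic dual-norm bounds $\|\boldsymbol\beta_i^*\|_\ell\le\Gamma_q\sigma_i^*$, $\|\boldsymbol\gamma_i^*\|_\ell\le\Gamma_q\zeta_i^*$, $\|\bar{\boldsymbol\beta}_i^*\|_\ell\le\Delta_p\sigma_i^*$, $\|\bar{\boldsymbol\gamma}_i^*\|_\ell\le\Delta_p\zeta_i^*$ of \eqref{centralizeddual_capacity}, getting $(\boldsymbol\beta_i^*)^T\boldsymbol V_i\le\Gamma_q\sigma_i^*\|\boldsymbol V_i\|_{\ell^*}$, $(\bar{\boldsymbol\beta}_i^*)^T(x_i\boldsymbol e_i-\boldsymbol Z_i)\le\Delta_p\sigma_i^*\|x_i\boldsymbol e_i-\boldsymbol Z_i\|_{\ell^*}$, and similarly for the $\gamma$-terms. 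Summing all of these, every term cancels and $h_i\le 0$, which finishes the proof.

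The step I expect to be the main obstacle is the bookkeeping in the middle paragraph, and in particular the legitimacy of replacing $C_i d_j^*$ by $\theta_j^*-\beta_{ij}^*-\gamma_{ij}^*$ (and $C_i r_k^*$ by $\bar\theta_k^*+\bar\beta_{ik}^*-\bar\gamma_{ik}^*$): this requires using the specific worst-case realization $\boldsymbol d^*=\boldsymbol\alpha^*$, $\boldsymbol r^*=\bar{\boldsymbol\alpha}^*$ established inside the proof of Theorem \ref{thm:thm2_capacity}, and since those argmaxes need not be unique the statement should be read with respect to that canonical choice (consistent with how the decentralized cost is posed). Everything else is routine conic weak duality: the dual feasibility constraints say the centralized prices leave no generator a strictly profitable deviation, and the norm constraints on the conic multipliers supply precisely the slack needed to dominate the bilinear $\boldsymbol V,\boldsymbol Z$ terms. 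The binary restriction $x_i\in\{0,1\}$ causes no difficulty, since the argument uses only $x_i\ge 0$ and never the convexity of the feasible set.
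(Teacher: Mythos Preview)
Your proposal is correct and follows essentially the same approach as the paper: both invoke Theorem \ref{thm:thm2_capacity} to get $h_i(x_i^*,u_i^*,\boldsymbol V_i^*,\boldsymbol Z_i^*)=0$, then combine the dual equalities of \eqref{centralizeddual_capacity} (with the identification $\boldsymbol d^*=\boldsymbol\alpha^*$, $\boldsymbol r^*=\bar{\boldsymbol\alpha}^*$), the decentralized feasibility constraints weighted by $\sigma_i^*,\zeta_i^*\ge 0$, and H\"older's inequality against the conic bounds $\|\boldsymbol\beta_i^*\|_\ell\le\Gamma_q\sigma_i^*$ etc.\ to conclude $h_i\le 0$. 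The only difference is cosmetic ordering---you substitute the dual equalities before adding the slack terms, whereas the paper adds the slacks first---and you are in fact more explicit than the paper about the need to take the canonical worst case $\boldsymbol d^*=\boldsymbol\alpha^*$, $\boldsymbol r^*=\bar{\boldsymbol\alpha}^*$ for the argument to go through.
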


\begin{proof}

Let $\mu^{*}, \boldsymbol{\theta}^{*}, \boldsymbol{\rho}^{*}$ be the  prices determined by  the dual variables of the Problem \eqref{centralizeddual_capacity}. Using the results of Theorem \ref{thm:thm2_capacity}, $h_{i}(x_{i}^{*}, u_{i}^{*}, \boldsymbol{V}_{i}^{*}) = 0$, because $f_{i} = g_{i}$.

Consider $h_{i}(x_{i}^{**}, u_{i}^{**}, \boldsymbol{V}_{i}^{**})$, 
$$ 
{\everymath{\displaystyle}
    \scalebox{0.9}{
    $
        \begin{array}{llr}
            & h_{i}(x_{i}^{**}, u_{i}^{**}, \boldsymbol{V}_{i}^{**}) = \mu^{*} u_{i}^{**} + \sum_{j=1}^{J} \theta_{j}^{*} V_{ij}^{**} + \sum_{k=1}^{I} \bar{\theta}_{k}^{*} Z_{ik}^{**} + \rho_{i}^{*} x_{i}^{**} \\
            & - (F_{i} x_{i}^{**} + C_{i} u_{i}^{**}  + C_{i} \sum_{j=1}^{J} V_{ij}^{**}  d_{j}^{*} + C_{i} \sum_{k=1}^{I} Z_{ik}^{**}  r_{k}^{*}) \\
            & \leq (\mu^{*} - C_{i}) u_{i}^{**} + (\rho_{i}^{*} - F_{i}) x_{i}^{**} \\
            \vspace{5pt}
            & + \sum_{j=1}^{J} (\theta_{j}^{*} - C_{i} \alpha_{j}^{*}) V_{ij}^{**} - \sum_{j=1}^{J} (\bar{\theta}_{k}^{*} - C_{i} \bar{\alpha}_{k}^{*}) Z_{ik}^{**}  \\
            \vspace{5pt}
            & + \sigma_{i}^{*} (p^{\max}_{i} x_{i}^{**} - u_{i}^{**} - \Gamma_{q} \|V_{i}^{**} \|_{\ell^{*}} - \Delta_{p} \| x_{i}^{**} \boldsymbol{e}_{i} - \boldsymbol{Z}_{i}^{**} \|_{\ell^{*}}) \\
            \vspace{5pt}
            & + \zeta_{i}^{*} (u_{i}^{**} - \Gamma_{q} \|V_{i}^{**} \|_{\ell^{*}} - \Delta_{p} \|\boldsymbol{Z}_{i} \|_{\ell^{*}})
        \end{array}
    $
    }
    }
$$

$$ 
{\everymath{\displaystyle}
    \scalebox{0.9}{
    $
        \begin{array}{llr}
            & \leq (\mu^{*} - \sigma_{i}^{*} + \zeta_{i}^{*} - C_{i}) u_{i}^{**} + (\rho_{i}^{*} + \sigma_{i}^{*} p^{\max}_{i} - \beta_{ii}^{*} - F_{i}) x_{i}^{**} \\
            & + \sum_{j=1}^{J} (\theta_{j}^{*} - \beta_{ij}^{*} - \gamma_{ij}^{*} - C_{i} \alpha_{j}^{*}) V_{ij}^{**} \\
            & + \sum_{k=1}^{I} (\theta_{k}^{*} + \beta_{ik}^{*} - \gamma_{ik}^{*} - C_{i} \alpha_{k}^{*}) Z_{ik}^{**} \leq 0. \\
        \end{array}
    $
    }
    }
$$

The first inequality is valid because $p^{\max}_{i} x_{i}^{**} - u_{i}^{**} - \Gamma_{q} \|V_{i}^{**} \|_{\ell^{*}} - \Delta_{p} \| x_{i}^{**} \boldsymbol{e}_{i} - \boldsymbol{Z}_{i}^{**} \|_{\ell^{*}} \geq 0$, $u_{i}^{**} - \Gamma_{q} \|V_{i}^{**} \|_{\ell^{*}} - \Delta_{p} \|\boldsymbol{Z}_{i}^{**} \|_{\ell^{*}} \geq 0$ by the constraints of Problem \eqref{decentralized_capacity} and $\sigma_{i}^{*} \geq 0$ and $\zeta_{i}^{*} \geq 0$ by Problem \eqref{centralizeddual_capacity}. The third inequality is valid because $(\mu^{*} - \sigma_{i}^{*} + \zeta_{i}^{*} - C_{i}) = 0$, $(\theta_{j}^{*} - \beta_{ij}^{*} - \gamma_{ij}^{*} - C_{i} \alpha_{j}^{*})=0$, $(\theta_{k}^{*} + \beta_{ik}^{*} - \gamma_{ik}^{*} - C_{i} \alpha_{k}^{*}) = 0$ and $(\rho_{i}^{*} + \sigma_{i}^{*} p^{\max}_{i} - \beta_{ii}^{*} - F_{i}) \leq 0$ by the constraints of Problem \eqref{centralizeddual_capacity}. Also, the second inequality is valid, because
\begin{equation*}
    \scalebox{0.85}{
    $
    (\boldsymbol{\beta}_{i}^{*})^{T} \boldsymbol{V}_{i}^{**} \leq \| \boldsymbol{\beta}_{i}^{*} \|_{\ell} \; \; \| \boldsymbol{V}_{i}^{**} \|_{\ell^{*}} \leq \sigma_{i}^{*} \Gamma_{q} \| \boldsymbol{V}_{i}^{**} \|_{\ell^{*}},
    $
    }
\end{equation*}
\begin{equation*}
    \scalebox{0.85}{
    $
    (\boldsymbol{\gamma}_{i}^{*})^{T} \boldsymbol{V}_{i}^{**} \leq \| \boldsymbol{\gamma}_{i}^{*} \|_{\ell} \; \; \|\boldsymbol{V}_{i}^{**} \|_{\ell^{*}} \leq \zeta_{i}^{*} \Gamma_{q} \|\boldsymbol{V}_{i}^{**} \|_{\ell^{*}},
    $
    }
\end{equation*}
\begin{equation*}
    \scalebox{0.85}{
    $
    (\bar{\boldsymbol{\beta}}_{i}^{*})^{T} (x_{i}^{**} \boldsymbol{e}_{i} - \boldsymbol{Z}_{i}^{**}) \leq \| \bar{\boldsymbol{\beta}}_{i}^{*} \|_{\ell} \; \; \| x_{i}^{**} \boldsymbol{e}_{i} - \boldsymbol{Z}_{i}^{**} \|_{\ell^{*}} \leq \sigma_{i}^{*} \Delta_{p} \|x_{i}^{**} \boldsymbol{e}_{i} - \boldsymbol{Z}_{i}^{**} \|_{\ell^{*}},
    $
    }
\end{equation*}
\begin{equation*}
    \scalebox{0.85}{
    $
    (\bar{\boldsymbol{\gamma}}_{i}^{*})^{T} \boldsymbol{Z}_{i}^{**} \leq \| \bar{\boldsymbol{\gamma}}_{i}^{*} \|_{\ell} \; \; \|\boldsymbol{Z}_{i}^{**} \|_{\ell^{*}} \leq \zeta_{i}^{*} \Delta_{p} \|\boldsymbol{Z}_{i}^{**} \|_{\ell^{*}},
    $
    }
\end{equation*}

by the dual norm properties and the constraints of Problem \eqref{centralizeddual_capacity}. Specifically, if $\boldsymbol{x}, \boldsymbol{z}$ are two $n$-dimensional vectors, then $\| \boldsymbol{z}\|_{\ell^{*}} = \{ \max_{\boldsymbol{x}} \boldsymbol{z}^{T} \boldsymbol{x}:  \| \boldsymbol{x}\|_{\ell} \leq 1 \}$ and $ \boldsymbol{z}^{T} \boldsymbol{x} \leq \| \boldsymbol{x}\|_{\ell} \| \boldsymbol{z}\|_{\ell^{*}}$ for all $\boldsymbol{x}, \boldsymbol{z}$ \cite{bertsekas1997nonlinear}.


So, $h_{i}(x_{i}^{**}, u_{i}^{**}, \boldsymbol{V}_{i}^{**}, \boldsymbol{Z}_{i}^{**}) \leq h_{i}(x_{i}^{*}, u_{i}^{*}, \boldsymbol{V}_{i}^{*}, \boldsymbol{Z}_{i}^{*})$, i.e., no generator $i$ can obtain a solution giving a greater profit than the centralized solution. \end{proof}

Also, $\boldsymbol{x}^{*}, \boldsymbol{u}^{*}, \boldsymbol{V}^{*}, \boldsymbol{Z}^{*}$ are market-clearing or $\sum_{i=1}^{I} (u_{i} + \sum_{j=1}^{J} V_{ij} {d}_{j} + \sum_{k=1}^{I} Z_{ik} r_{k}) \geq \sum_{j=1}^{J} (d_{j} + \bar{q}_{j})$ for all $\boldsymbol{d} \in \mathcal{D}$ and $\boldsymbol{r} \in \mathcal{U}$, so the generators do not have incentive to self-schedule.

\section{Example with load uncertainty}
\label{sec:sec_load_example}

In this section, we demonstrate adaptive pricing in detail on the Scarf example with load uncertainty. The day-ahead payments and prices in the ARO problem are larger than those in the deterministic problem, because the commitments and dispatch are more conservative. However, the ARO approach reduces the need for ad hoc corrections, so the total cost may be significantly lower once the uncertainty is realized.

The Scarf example, after fixing the binary variables to their optimal values, is based on the following formulation
$$
    {\everymath{\displaystyle}
    \scalebox{0.9}{
    $
    \begin{array}{rlr}
    \min_{\boldsymbol{x}, \boldsymbol{p}} & \sum_{i=1}^{I} F_{i} x_{i} + C_{i} p_{i} \\
    \text{s.t.} & \sum_{i=1}^{I} p_{i} = \sum_{j=1}^{J} \bar{q}_{j}, \\
    & p_{i} \leq p^{\max}_{i} x_{i}, \quad \forall i \in [I], \\
    & x_{i} = x_{i}^{*}, \quad \forall i \in [I], \\
    & x_{i}, p_{i} \geq 0, \quad \forall i \in [I]. \\
    \end{array}
    $
    }
    }
$$
There are only two types of generators, whose costs and capacities are asymmetrical. Their characteristics are summarized in Table \ref{tab:tab1}.

\begin{table}[ht]
    \centering
    \caption{The two types of generators in the Scarf example and their commitment costs, dispatch costs and maximum capacity.}
    \begin{tabular}{c|ccc}
         \textbf{Type} & $F_{i}$ & $C_{i}$ & $p_{i}^{\max}$ \\
         \hline
         1 & 53 & 3 & 16 \\
         2 & 30 & 2 & 7
    \end{tabular}
    \label{tab:tab1}
\end{table}

Suppose we have two generators of Type 1 and 6 generators of type 2. We also have five consumers with expected load $[8, 8, 3, 5, 16]$, so the total expected load is $\sum_{j=1}^{J} \bar{q}_{j} = 40$.

The results for the deterministic problem are summarized in Table \ref{tab:tab2}.

\begin{itemize}
    \item Objective: \$ 260,
    \item Dual price of the load: 2 \$ / unit.
\end{itemize}

\begin{table}[ht]
    \centering
    \caption{Commitment and dispatch in the deterministic Scarf example. We choose to turn on all generators of type 2, because they meet the expected demand and have the least cost.}
    \begin{tabular}{c|cccccccc}
    \textbf{Type} & 1 & 1 & 2 & 2 & 2 & 2 & 2 & 2 \\
     \hline
     \textbf{Commitment} & 0 & 0 & 1 & 1 & 1 & 1 & 1 & 1 \\
     \textbf{Dispatch} & 0 & 0 & 5 & 7 & 7 & 7 & 7 & 7
    \end{tabular}
    \label{tab:tab2}
\end{table}

Table \ref{tab:tab3} summarizes the pay-as-bid payments, which are $F_{i} x_{i}^{*} + C_{i} p_{i}^{*}$ for each $i$.

\begin{table}[ht]
    \centering
    \caption{Pay-as-bid payments in the Scarf example. The bids consist of commitment and dispatch costs.}
    \begin{tabular}{c|cccccccc}
     \textbf{Type} & 1 & 1 & 2 & 2 & 2 & 2 & 2 & 2 \\
     \hline
     $F_{i} x_{i}^{*}$ & 0 & 0 & 30 & 30 & 30 & 30 & 30 & 30 \\
     $C_{i} p_{i}^{*}$ & 0 & 0 & 10 & 14 & 14 & 14 & 14 & 14 \\
     \textbf{Payments} & 0 & 0 & 40 & 44 & 44 & 44 & 44 & 44
    \end{tabular}
    \label{tab:tab3}
\end{table}

The uniform price payments are $2 p_{i}^{*} + \rho_{i}^{*} x_{i}^{*}$ for each $i$, where $\rho_{i}^{*}$ is the dual variable of the $x_{i} = x_{i}^{*}$ constraint. They are are summarized in Table \ref{tab:tab4}. In general, $F_{i} x_{i}^{*}$ is not the same as $\rho_{i}^{*} x_{i}^{*}$.

\begin{table}[ht]
    \centering
    \caption{Uniform price payments with uplifts in the Scarf example.}
    \begin{tabular}{c|cccccccc}
     \textbf{Type} & 1 & 1 & 2 & 2 & 2 & 2 & 2 & 2 \\
     \hline
     $2 p_{i}^{*}$ & 0 & 0 & 10 & 14 & 14 & 14 & 14 & 14 \\
     Uplifts & 0 & 0 & 30 & 30 & 30 & 30 & 30 & 30 \\
     \textbf{Payments} & 0 & 0 & 40 & 44 & 44 & 44 & 44 & 44
    \end{tabular}
    \label{tab:tab4}
\end{table}



In the adaptive problem we also need to select the budget of uncertainty. Suppose we use a budget uncertainty set with $\Gamma_{q} = 20$ , which means that we are protected from an increase of the expected demand from 40 to 60 or a decrease from 40 to 20. In this case, $\Delta_{p}=0$. The results for the ARO problem are summarized in Table \ref{tab:tab5}.

\begin{itemize}
    \item Objective: \$ 378,
    \item Dual price of the load: 3 \$ / unit.
\end{itemize}

\begin{table}[ht]
    \centering
    \caption{Commitment and non-adaptive dispatch in the Scarf example. Generator 3, which is of Type 2, will have only deterministic dispatch, which does not depend on the uncertain load.}
    \begin{tabular}{c|cccccccc}
     \textbf{Type} & 1 & 1 & 2 & 2 & 2 & 2 & 2 & 2 \\
     \hline
     \textbf{Commitment} & 1 & 1 & 1 & 0 & 0 & 1 & 1 & 1 \\
     $u_{i}^{*}$ & 8 & 14.5 & 7 & 0 & 0 & 3.5 & 3.5 & 3.5 
    \end{tabular}
    \label{tab:tab5}
\end{table}

The LDR for each generator $i$ is $p_{i}(\boldsymbol{d}) = u_{i} + \sum_{j=1}^{J} V_{ij} d_{j}$. The matrix $\boldsymbol{V}^{*}$ is

$$
\scalebox{0.8}{
$
    \begin{bmatrix}
    0.4 & 0.4 & 0.4 & 0.4 & 0.4 \\
    0.075 & 0.075 & 0.075 & 0.075 & 0.075 \\
    0.0 & 0.0 & 0.0 & 0.0 & 0.0 \\
    0.0 & 0.0 & 0.0 & 0.0 & 0.0 \\
    0.0 & 0.0 & 0.0 & 0.0 & 0.0 \\
    0.175 & 0.175 & 0.175 & 0.175 & 0.175 \\
    0.175 & 0.175 & 0.175 & 0.175 & 0.175 \\
    0.175 & 0.175 & 0.175 & 0.175 & 0.175 \\
    \end{bmatrix}. 
$
}
$$
For each $i$, $V_{ij}^{*}$ is the same for all consumers $j$, because the load $\sum_{j=1}^{J} \bar{q}_{j}$ bundles consumers together. This can change if we add different coefficients for each $j$ in the uncertainty set.

Table \ref{tab:tab6} summarizes the day-ahead payments in an adaptive pay-as-bid scheme, where each generator $i$ is paid $F_{i} x_{i}^{*} + C_{i} u_{i}^{*}$ and the total payments are \$ 328.5.

\begin{table}[ht]
    \centering
    \caption{Pay-as-bid payments in the adaptive Scarf example. The bids consist of commitment and non-adaptive dispatch costs.}
    \begin{tabular}{c|cccccccc}
     \textbf{Type} & 1 & 1 & 2 & 2 & 2 & 2 & 2 & 2 \\
     \hline
     $F_{i} x_{i}^{*}$ & 53 & 53 & 30 & 0 & 0 & 30 & 30 & 30 \\
     $C_{i}u_{i}^{*}$ & 24 & 43.5 & 14 & 0 & 0 & 7 & 7 & 7 \\
     \textbf{Payments} & 77 & 96.5 & 44 & 0 & 0 & 37 & 37 & 37
    \end{tabular}
    \label{tab:tab6}
\end{table}




Table \ref{tab:tab7} summarizes the day-ahead payments in the adaptive marginal pricing scheme. They payments are the same as the pay-as-bid scheme. Note that the uplifts all less than the deterministic case for each generator of Type 2.

\begin{table}[ht]
    \centering
    \caption{Marginal price payments in the adaptive Scarf example.}
    \begin{tabular}{c|cccccccc}
     \textbf{Type} & 1 & 1 & 2 & 2 & 2 & 2 & 2 & 2 \\
     \hline
     $\mu^{*} u_{i}^{*}$ & 24 & 43.5 & 21 & 0 & 0 & 10.5 & 10.5 & 10.5 \\
     Uplifts & 53 & 53 & 23 & 0 & 0 & 26.5 & 26.5 & 26.5 \\
     \textbf{Payments} & 77 & 96.5 & 44 & 0 & 0 & 37 & 37 & 37
    \end{tabular}
    \label{tab:tab7}
\end{table}

Both the deterministic problem and the commitments and non-adaptive dispatch in the ARO problem try to meet a load of 40. However, the deterministic cost is \$ 280, while the non-adaptive ARO cost is \$ 328.5. This is because the ARO commitment is also feasible for an increase in the load up to 20, which is the worst-case scenario in our uncertainty set. The deterministic problem cannot meet such an increase, because Type 2 generators are used at capacity, except the first one, which can provide only two more units of power. To meet the demand of $40+20$, we would need to turn on more generators the following day, which would be very costly and would require large payments.



\subsection{Intra-day dispatch and payments}

In this section, we consider the intra-day economic dispatch problem and the pricing implications of uncertain scenarios.

Suppose that the following day the uncertainty $\boldsymbol{d} \geq 0$ is realized, so the realized load is $\boldsymbol{q} = \bar{\boldsymbol{q}} + \boldsymbol{d}$. We have made some commitments in the day-ahead market, so we can solve the following linear optimization problem (LP) to find the optimal dispatch, based on the new data.
$$
    {\everymath{\displaystyle}
    \scalebox{0.9}{
    $
        \begin{array}{rlr}
            \min_{\boldsymbol{p}} & \sum_{i=1}^{I} C_{i} p_{i} \\
            \text{s.t.} & \sum_{i=1}^{I} p_{i} = \sum_{j=1}^{J} d_{j}, \\
            & p_{i} \leq p^{\max}_{i} x_{i}^{*} - u_{i}^{*}, \quad \forall i \in [I], \\
            & p_{i} \geq 0, \quad \forall i \in [I], \\
        \end{array} 
    $
    }
    }
$$
where $x_{i}^{*}$ and $u_{i}^{*}$ are the solution for generator $i$ based on the ARO problem. So, we want to meet only the additional realized load $\boldsymbol{d}$, while we have already committed to produce $u_{i}^{*}$ to meet the expected load.

The intra-day electricity price is 2 for loads smaller than 10 and 3 for loads larger than 10, which is equal to the ARO price. Using ARO, we can provide the intra-day electricity price for all realizations of the uncertain parameters. As a result, the pricing is more transparent and predictable and the market participants can plan their bids.

\begin{figure}
    \begin{center}
        \includegraphics[width=0.4\textwidth]{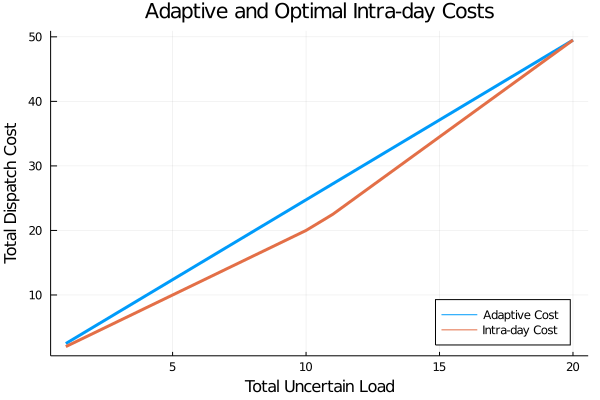}
        \caption{Comparison of the optimal intra-day dispatch cost and the adaptive part of the day-ahead cost. The adaptive part of the day-ahead cost is an upper bound and is equal to the optimal intra-day dispatch for the worst-case scenario.}
        \label{fig:fig1}
    \end{center}
\end{figure}

In Figure \ref{fig:fig1}, we plot the cost of the LP and compare it to the adaptive part of the ARO cost $\sum_{i=1}^{I} C_{i} \sum_{j=1}^{J} V_{ij}^{*} d_{j}$. We gradually increase the total realized load $\sum_{j=1}^{J} d_{j}$ from zero to $\Gamma_{q}=20$, which is the worst-case scenario. The adaptive dispatch is an upper bound to the optimized intra-day dispatch. However, we have optimized for the worst-case uncertainty, so the adaptive problem and the LP will have the same cost, if the worst-case load is realized.


\section{Example with load and capacity uncertainty}
\label{sec:sec_load_cap_example}

We use the example of Section \ref{sec:sec_load_example} with uncertain load and consider the uncertainty in the capacity of each generator. Again, we set $\Gamma_{q} = 20$, which means that we are protected from a load increase from 40 to 60. Also, we choose $\Delta_p = 0.5$, which means that we are protected from a decrease of the expected capacity from $\bar{p}_{i}^{\max}$ to $\bar{p}_{i}^{\max}-0.5$ for each generator $i$. The results for the ARO problem are summarized in Table \ref{tab:tab8}.
\begin{itemize}
    \item Objective: \$ 402.25,
    \item Dual price of the load: 3 \$ / unit.
\end{itemize}

\begin{table}[ht]
    \centering
    \caption{Commitment and non-adaptive dispatch in the adaptive Scarf example with load and capacity uncertainty.}
    \begin{tabular}{c|cccccccc}
         \textbf{Type} & 1 & 1 & 2 & 2 & 2 & 2 & 2 & 2 \\
         \hline
         \textbf{Commitments} & 1 & 1 & 1 & 1 & 1 & 1 & 1 & 0 \\
         $u_{i}^{*}$ & 8.0 & 8.0 & 3.5 & 3.5 & 3.5 & 6.75 & 6.75 & 0
    \end{tabular}
    \label{tab:tab8}
\end{table}

The LDR is $p_{i}(\boldsymbol{d}, \boldsymbol{r}) = u_{i} + \sum_{j=1}^{J} V_{ij} d_{j} + \sum_{k=1}^{I} Z_{ik} r_{k}$. The matrix $\boldsymbol{V}^{*}$ is
$$ 
\scalebox{0.8}{
$
\begin{bmatrix}
 0.25625 &  0.25625 &  0.25625 &  0.25625 &  0.25625 \\
 0.25625 &  0.25625 &  0.25625 &  0.25625 &  0.25625 \\
 0.1625 &   0.1625 &   0.1625 &   0.1625 &   0.1625 \\
 0.1625 &   0.1625 &   0.1625 &   0.1625 &   0.1625 \\
 0.0 &      0.0 &      0.0 &      0.0 &      0.0 \\
 0.0 &     0.0 &      0.0 &      0.0 &      0.0 \\
 0.1625 &   0.1625 &   0.1625 &   0.1625 &   0.1625 \\
 0.0 &      0.0 &      0.0 &      0.0 &      0.0
\end{bmatrix}.
$
}
$$

The matrix $\boldsymbol{Z}^{*}$ is
$$ 
\scalebox{0.8}{
$
\begin{bmatrix}
  5.75 &   4.75 &   5.75 &  -5.75 &  -5.75 &  -5.75 &   5.75 &  -5.75 \\ 
 -5.75 &  -4.75 &  -5.75 &   5.75 &   5.75 &   5.75 &  -5.75 &   5.75 \\ 
  0.0 &   -0.5 &    0.5 &   -0.5 &    0.0 &    0.5 &    0.0 &    0.5 \\ 
  0.5 &    0.0 &    0.0 &    0.5 &    0.5 &    0.0 &    0.5 &   -0.5 \\ 
 -0.5 &   -0.5 &   -0.5 &    0.0 &    0.5 &   -0.5 &   -0.5 &    0.5 \\ 
 -0.5 &    0.5 &    0.5 &   -0.5 &   -0.5 &    0.5 &   -0.5 &   -0.5 \\ 
  0.5 &   0.5 &   -0.5 &    0.5 &   -0.5 &   -0.5 &    0.5 &    0.0 \\
  0.0 &    0.0 &    0.0 &    0.0 &    0.0 &    0.0 &    0.0 &    0.0 
\end{bmatrix}.
$
}
$$

Note that the sum of each column of $\boldsymbol{V}^{*}$ is one and of $\boldsymbol{Z}^{*}$ is zero, because of the robust constraint $\Gamma_{q} \|\boldsymbol{1} - \sum_{i=1}^{I} \boldsymbol{V}_{i} \|_{\ell^{*}} + \Delta_{p} \| - \sum_{i=1}^{I} \boldsymbol{Z}_{i} \|_{\ell^{*}} \leq 0$. The norms are non-negative, so $\|\boldsymbol{1} - \sum_{i=1}^{I} \boldsymbol{V}_{i} \|_{\ell^{*}} = 0$ and $\| \sum_{i=1}^{I} \boldsymbol{Z}_{i} \|_{\ell^{*}} = 0$.

Table \ref{tab:tab9} summarizes the day-ahead payments in a pay-as-bid scheme, where the total payments are \$ 352.

\begin{table}[ht]
    \centering
    \caption{Pay-as-bid payments in the adaptive Scarf example with load and capacity uncertainty.}
    \begin{tabular}{c|cccccccc}
     \textbf{Type} & 1 & 1 & 2 & 2 & 2 & 2 & 2 & 2 \\
     \hline
     $F_{i} x_{i}^{*}$ & 53 & 53 & 30 & 30 & 30 & 30 & 30 & 0 \\
     $C_{i} u_{i}^{*}$ & 24 & 24 & 7 & 7 & 13.5 & 13.5 & 7 & 0 \\
     \textbf{Payments} & 77 & 77 & 37 & 37 & 43.5 & 43.5 & 37.0 & 0
    \end{tabular}
    \label{tab:tab9}
\end{table}





Table \ref{tab:tab10} summarizes the day-ahead payments in the adaptive marginal pricing scheme. The payments are the same as the pay-as-bid scheme. The uplifts for Type 2 generators are still smaller than the deterministic uplifts, while they are close to the adaptive uplifts with load uncertainty.

\begin{table}[ht]
    \centering
    \caption{Marginal price payments in the adaptive Scarf example with load and capacity uncertainty.}
    \begin{tabular}{c|cccccccc}
     \textbf{Type} & 1 & 1 & 2 & 2 & 2 & 2 & 2 & 2 \\
     \hline
     $\mu^{*} u_{i}^{*}$ & 24 & 24 & 10.5 & 10.5 & 20.25 & 20.25 & 10.5 & 0 \\
     Uplifts & 53 & 53 & 26.5 & 26.5 & 23.25 & 23.25 & 26.5 & 0 \\
     \textbf{Payments} & 77 & 77 & 37 & 37 & 43.5 & 43.5 & 37.0 & 0
    \end{tabular}
    \label{tab:tab10}
\end{table}

In this case, we also protect against drops in the available capacity, so we are more conservative. As a result, the prices and payments increase from \$ 328.5 to \$ 352. We turn on all generators, as the commitments in the deterministic problem and in the ARO problem with load capacity are not robust to a decrease of 0.5 in the capacity of the generators.


\section{Multiperiod Pricing}
\label{sec:sec_multi}

In this section, we present our method on a realistic multiperiod formulation with ramp constraints and compare it to deterministic marginal and convex hull pricing. Again, the ARO commitments and dispatch are more conservative, so there is an increase in the payments and the prices compared to the deterministic case. However, there are no uplifts related to optimality gaps that are present in convex hull pricing.

Consider the following example by Chen et al. \cite{chen2020unified}, which includes two generators and a 3-hour horizon with expected load $\bar{q}_{t}$ 95, 100, and 130 MW. G1 has a maximum of 100 MW, and energy offer $\$10$ /MWh. G2 has a 20 MW minimum, 35 MW maximum, energy offer $\$50$ /MWh, start-up cost $\$1000$, no-load cost $\$30$, ramp rate 5 MW/hour, start-up rate 22.5 MW/hour, shut-down rate 35 MW/hour, minimum up/down times of one hour, and is initially offline. The UC formulation is provided below.
$$
    {\everymath{\displaystyle}
    \scalebox{0.95}{
    $
        \begin{array}{rlr}
        \min_{\boldsymbol{x}, \boldsymbol{p}} & \sum_{t=1}^{3} 10 p_{1, t}+30 x_{2, t}^{ON} +50 p_{2, t}+1000 x_{2, t}^{RU} \\
        \text{s.t.}& p_{1, t}+p_{2, t}= \bar{q}_{t}, & 1 \leq t , \\
        & 0 \leq p_{1, t} \leq 100, & 1 \leq t , \\
        & 20 x_{2, t}^{ON} \leq p_{2, t} 5 x_{2, t}^{ON}, & 1 \leq t , \\
        & p_{2, t}-p_{2, t-1} \leq 5 x_{2, t-1}^{ON} +22.5 x_{2, t}^{RU}, & 1 \leq t , \\
        & p_{2, t-1}-p_{2, t} \leq 5 x_{2, t}^{ON} +35 x_{2, t}^{RD}, & 2 \leq t , \\
        & x_{2, t}^{ON} - x_{2, t-1}^{ON} = x_{2, t}^{RU} - x_{2, t}^{RD}, & 1 \leq t , \\
        & x_{2, t}^{RU} \leq x_{2, t}^{ON}, \quad x_{2, t}^{RU} \leq 1-x_{2, t-1}^{ON}, & 1 \leq t ,
        \end{array}
    $
    }
    }
$$
with $\boldsymbol{p} \geq 0$ and $x_{2, t}^{ON}, x_{2, t}^{RU}, x_{2, t}^{RD} \in \{0, 1\} \; \forall t$ representing the status, start-up and shut-down variables respectively. The results for the deterministic problem are summarized in Table \ref{tab:tab11}. We turn on G2 at $t=1$ and both generators are on for all time periods. The objective is \$ 7340 and the electricity price is $\boldsymbol{\mu} = [10, 10, 90]$  \$ / MW.

\begin{table}[ht]
    \centering
    \caption{Dispatch solution for each generator at each time period in the deterministic multiperiod problem.}
    \begin{tabular}{c|cccccccc}
     \textbf{Generator} & 1 & 2 \\
     \hline
     \textbf{Dispatch} & [75, 75, 100] & [20, 25, 30]
    \end{tabular}
    
    \label{tab:tab11}
\end{table}

The convex hull payments are summarized in Table \ref{tab:tab12}. The convex hull prices are $\boldsymbol{\mu}^{CH} = [10, 10, 276]$  \$ / MW, so G1 is paid $\$$ 2500, while G2 is paid $\$$ 4445 and an uplift of $\$$ 365 \cite{andrianesis2021computation}.

\begin{table}[ht]
    \centering
    \caption{Convex hull payments and uplifts for each generator at each time period in the multiperiod example.}
    \begin{tabular}{c|cccccccc}
     \textbf{Generator} & 1 & 2 \\
     \hline
     $\mu^{CH}_{t} \; p_{i, t}^{*}$ & [750, 750, 27600] & [200, 250, 8250] \\
     Uplifts & -26600 & -4255 \\
     \textbf{Payments} & 2500 & 4445
    \end{tabular}
    \label{tab:tab12}
\end{table}

The objective of the convex hull method is $\$$ 365 less than the optimal objective, because of the duality gap. So, there is an additional uplift of $\$$ 365 that is paid to G2.

The deterministic pay-as-bid payments are summarized in Table \ref{tab:tab13}. They are are the same as the convex hull payments but do not feature a duality gap. In addition, these payments are equivalent to a marginal pricing scheme by \cite{o2005efficient}.

\begin{table}[ht]
    \centering
    \caption{Pay-as-bid payments for each generator at each time period in the multiperiod example.}
    \begin{tabular}{c|cccccccc}
     \textbf{Generator} & 1 & 2 \\
     \hline
     Commitment Cost & [0, 0, 0] & [1030, 30, 30] \\
     Dispatch Cost & [750, 750, 1000] & [1000, 1250, 1500] \\
     Total Cost & [750, 750, 1000] & [2030, 1280, 1530] \\
     \textbf{Payments} & 2500 & 4840
    \end{tabular}
    \label{tab:tab13}
\end{table}

In the ARO problem we use budget uncertainty sets with $\Gamma_{q} = [10, 10, 2]$ and $\Delta_{p} = [0, 7.5, 0.5]$ at each time period. The objective increases to \$ 7860 and the electricity price is $\boldsymbol{\mu}^{*} = [10, 10, 130]$ \$ / MW. Again, we turn on G2 at $t=1$ and both generators are on for all time periods. 

\begin{table}[ht]
    \centering
    \caption{Non-adaptive dispatch solution for each generator at each time period in the ARO multiperiod problem.}
    \begin{tabular}{c|cccccccc}
     \textbf{Generator} & 1 & 2 \\
     \hline
     \textbf{Dispatch} & [72.5, 72.5, 97.5] & [22.5, 27.5, 32.5] \\
    \end{tabular}
    \label{tab:tab14}
\end{table}

\noindent The LDR is $p_{i, t}(\boldsymbol{d}, \boldsymbol{r}) = u_{it} + \sum_{j=1}^{J} V_{ijt} d_{jt} + \sum_{k=1}^{I} Z_{ikt} r_{kt}$. The non-adaptive dispatch results are summarized in Table \ref{tab:tab14}. The matrix $\boldsymbol{V}_{t}^{*}$ at each time period is
\begin{equation*}
\scalebox{0.8}{
$
    \begin{array}{cc}
    \vspace{8pt}
    \begin{bmatrix}
    1.0 & 1.0 & 1.0 & 1.0 & 1.0 \\
    0.0 & 0.0 & 0.0 & 0.0 & 0.0
    \end{bmatrix} &  t = 1, 2, 3
    \end{array}
$
}
\end{equation*}
with the rows corresponding to the G1 and G2 and the columns corresponding to three consumers that share the load. Also, $\boldsymbol{Z}^{*}$ contains only zeros.


The day-ahead payments to the generators are \$ 7640. They pay-as-bid payments are summarized in Table \ref{tab:tab15} and each generator is paid $F_{i}^{ON} x_{i, t}^{ON} + F_{i}^{RU} x_{i, t}^{RU} + F_{i}^{RD} x_{i, t}^{RD} + C_{i, t} u_{i, t}^{*}$, where the coefficients correspond to the commitment and dispatch costs.

\begin{table}[ht]
    \centering
    \caption{Pay-as-bid payments for each generator at each time period in the ARO multiperiod example.}
    \begin{tabular}{c|cc}
     \textbf{Generator} & 1 & 2 \\
     \hline
     Commitment Cost & [0, 0, 0] & [1030, 30, 30] \\
     Dispatch Cost & [725, 725, 975] & [1125, 1375, 1625] \\
     Total Cost & [725, 725, 975] & [2155, 1405, 1655] \\
     \textbf{Payments} & 2425 & 5215
    \end{tabular}
    \label{tab:tab15}
\end{table}

The adaptive marginal price payments are summarized in Table \ref{tab:tab16}. 

\begin{table}[ht]
    \centering
    \caption{Marginal pricing payments for each generator at each time period in the ARO multiperiod example.}
    \begin{tabular}{c|cc}
     \textbf{Generator} & 1 & 2 \\
     \hline
     $\mu_{t}^{*} u_{i, t}^{*}$ & [725, 725, 12675] & [225, 275, 4225] \\
     Uplifts & -11700  & 490 \\
     \textbf{Payments} & 2425 & 5215
    \end{tabular}
    \label{tab:tab16}
\end{table}

The non-adaptive dispatch in the ARO problem and the dispatch in the deterministic problem try to meet the same level of demand, namely [95, 100, 130]. However, the cost is higher in the ARO problem, because we protect against uncertainty in the load and in the capacity. For example, the commitments and dispatch are still feasible for a drop of [0, 7.5, 0.5] in the capacity of each generator and an increase of [10, 10, 2] in the load. So, the payments to the generators and the price $\boldsymbol{\mu}^{*}$ of electricity are higher compared to the deterministic case. In addition, the adaptive payment mechanisms we use do not feature a duality gap.

\section{Conclusion}
\label{sec:ada_conclusion}

In this work, we introduce the first pay-as-bid and marginal pricing methods for energy markets with non-convexities under uncertainty. We consider ARO formulations that protect against uncertainty in the load and capacity parameters and we provide the corresponding adaptive pricing schemes. We apply our method to realistic examples with increasing degrees of complexity and show, both theoretically and empirically, that it eliminates uplifts and corrections that are necessary in deterministic approaches.

\bibliographystyle{ieeetr}
\bibliography{ref}

\begin{thebibliography}{10}

\bibitem{pinson2023may}
P.~Pinson, ``What may future electricity markets look like?,'' {\em Journal of Modern Power Systems and Clean Energy}, 2023.

\bibitem{morales2013integrating}
J.~M. Morales, A.~J. Conejo, H.~Madsen, P.~Pinson, and M.~Zugno, {\em Integrating renewables in electricity markets: operational problems}, vol.~205.
\newblock Springer Science \& Business Media, 2013.

\bibitem{oren2004pay}
S.~Oren, ``When is a pay-as bid preferable to uniform price in electricity markets,'' in {\em IEEE PES Power Systems Conference and Exposition}, pp.~1618--1620, IEEE, 2004.

\bibitem{hannan2022vehicle}
M.~Hannan, M.~Mollik, A.~Q. Al-Shetwi, S.~Rahman, M.~Mansor, R.~Begum, K.~Muttaqi, and Z.~Dong, ``Vehicle to grid connected technologies and charging strategies: Operation, control, issues and recommendations,'' {\em Journal of Cleaner Production}, p.~130587, 2022.

\bibitem{tan2016integration}
K.~M. Tan, V.~K. Ramachandaramurthy, and J.~Y. Yong, ``Integration of electric vehicles in smart grid: A review on vehicle to grid technologies and optimization techniques,'' {\em Renewable and Sustainable Energy Reviews}, vol.~53, pp.~720--732, 2016.

\bibitem{khorasany2020new}
M.~Khorasany, A.~Paudel, R.~Razzaghi, and P.~Siano, ``A new method for peer matching and negotiation of prosumers in peer-to-peer energy markets,'' {\em IEEE Transactions on Smart Grid}, vol.~12, no.~3, pp.~2472--2483, 2020.

\bibitem{morstyn2018using}
T.~Morstyn, N.~Farrell, S.~J. Darby, and M.~D. McCulloch, ``Using peer-to-peer energy-trading platforms to incentivize prosumers to form federated power plants,'' {\em Nature energy}, vol.~3, no.~2, pp.~94--101, 2018.

\bibitem{zhang2021strategic}
Z.~Zhang, H.~Tang, J.~Ren, Q.~Huang, and W.-J. Lee, ``Strategic prosumers-based peer-to-peer energy market design for community microgrids,'' {\em IEEE Transactions on Industry Applications}, vol.~57, no.~3, pp.~2048--2057, 2021.

\bibitem{silva2022short}
L.~Silva-Rodriguez, A.~Sanjab, E.~Fumagalli, A.~Virag, and M.~Gibescu, ``Short term wholesale electricity market designs: A review of identified challenges and promising solutions,'' {\em Renewable and Sustainable Energy Reviews}, vol.~160, p.~112228, 2022.

\bibitem{bertsimas2012adaptive}
D.~Bertsimas, E.~Litvinov, X.~A. Sun, J.~Zhao, and T.~Zheng, ``Adaptive robust optimization for the security constrained unit commitment problem,'' {\em IEEE Transactions on Power Systems}, vol.~28, no.~1, pp.~52--63, 2012.

\bibitem{lorca2016multistage}
A.~Lorca, X.~A. Sun, E.~Litvinov, and T.~Zheng, ``Multistage adaptive robust optimization for the unit commitment problem,'' {\em Operations Research}, vol.~64, no.~1, pp.~32--51, 2016.

\bibitem{xiong2016distributionally}
P.~Xiong, P.~Jirutitijaroen, and C.~Singh, ``A distributionally robust optimization model for unit commitment considering uncertain wind power generation,'' {\em IEEE Transactions on Power Systems}, vol.~32, no.~1, pp.~39--49, 2016.

\bibitem{zheng2014stochastic}
Q.~P. Zheng, J.~Wang, and A.~L. Liu, ``Stochastic optimization for unit commitment—a review,'' {\em IEEE Transactions on Power Systems}, vol.~30, no.~4, pp.~1913--1924, 2014.

\bibitem{jiang2011robust}
R.~Jiang, J.~Wang, and Y.~Guan, ``Robust unit commitment with wind power and pumped storage hydro,'' {\em IEEE Transactions on Power Systems}, vol.~27, no.~2, pp.~800--810, 2011.

\bibitem{reddy2017review}
S.~S. Reddy, V.~Sandeep, and C.-M. Jung, ``Review of stochastic optimization methods for smart grid,'' {\em Frontiers in Energy}, vol.~11, pp.~197--209, 2017.

\bibitem{dutta2017literature}
G.~Dutta and K.~Mitra, ``A literature review on dynamic pricing of electricity,'' {\em Journal of the Operational Research Society}, vol.~68, no.~10, pp.~1131--1145, 2017.

\bibitem{mazzi2017price}
N.~Mazzi, J.~Kazempour, and P.~Pinson, ``Price-taker offering strategy in electricity pay-as-bid markets,'' {\em IEEE Transactions on Power Systems}, vol.~33, no.~2, pp.~2175--2183, 2017.

\bibitem{o2005efficient}
R.~P. O'Neill, P.~M. Sotkiewicz, B.~F. Hobbs, M.~H. Rothkopf, and W.~R. Stewart~Jr, ``Efficient market-clearing prices in markets with nonconvexities,'' {\em European Journal of Operational Research}, vol.~164, no.~1, pp.~269--285, 2005.

\bibitem{o2016dual}
R.~P. O'Neill, A.~Castillo, B.~Eldridge, and R.~B. Hytowitz, ``Dual pricing algorithm in iso markets,'' {\em IEEE Transactions on Power Systems}, vol.~32, no.~4, pp.~3308--3310, 2016.

\bibitem{bjorndal2008equilibrium}
M.~Bj{\o}rndal and K.~J{\"o}rnsten, ``Equilibrium prices supported by dual price functions in markets with non-convexities,'' {\em European Journal of Operational Research}, vol.~190, no.~3, pp.~768--789, 2008.

\bibitem{galiana2003reconciling}
F.~D. Galiana, A.~L. Motto, and F.~Bouffard, ``Reconciling social welfare, agent profits, and consumer payments in electricity pools,'' {\em IEEE Transactions on Power Systems}, vol.~18, no.~2, pp.~452--459, 2003.

\bibitem{hua2016convex}
B.~Hua and R.~Baldick, ``A convex primal formulation for convex hull pricing,'' {\em IEEE Transactions on Power Systems}, vol.~32, no.~5, pp.~3814--3823, 2016.

\bibitem{andrianesis2021computation}
P.~Andrianesis, D.~Bertsimas, M.~C. Caramanis, and W.~W. Hogan, ``Computation of convex hull prices in electricity markets with non-convexities using dantzig-wolfe decomposition,'' {\em IEEE Transactions on Power Systems}, vol.~37, no.~4, pp.~2578--2589, 2021.

\bibitem{ye2016uncertainty}
H.~Ye, Y.~Ge, M.~Shahidehpour, and Z.~Li, ``Uncertainty marginal price, transmission reserve, and day-ahead market clearing with robust unit commitment,'' {\em IEEE Transactions on Power Systems}, vol.~32, no.~3, pp.~1782--1795, 2016.

\bibitem{fang2019introducing}
X.~Fang, B.-M. Hodge, E.~Du, C.~Kang, and F.~Li, ``Introducing uncertainty components in locational marginal prices for pricing wind power and load uncertainties,'' {\em IEEE Transactions on Power Systems}, vol.~34, no.~3, pp.~2013--2024, 2019.

\bibitem{liberopoulos2016critical}
G.~Liberopoulos and P.~Andrianesis, ``Critical review of pricing schemes in markets with non-convex costs,'' {\em Operations Research}, vol.~64, no.~1, pp.~17--31, 2016.

\bibitem{ben2009robust}
A.~Ben-Tal, L.~El~Ghaoui, and A.~Nemirovski, {\em Robust optimization}, vol.~28.
\newblock Princeton University Press, 2009.

\bibitem{bertsimas2011theory}
D.~Bertsimas, D.~B. Brown, and C.~Caramanis, ``Theory and applications of robust optimization,'' {\em SIAM review}, vol.~53, no.~3, pp.~464--501, 2011.

\bibitem{bertsimas2022robust}
D.~Bertsimas and D.~den Hertog, {\em Robust and adaptive optimization}.
\newblock Dynamic Ideas, 2022.

\bibitem{guan2013uncertainty}
Y.~Guan and J.~Wang, ``Uncertainty sets for robust unit commitment,'' {\em IEEE Transactions on Power Systems}, vol.~29, no.~3, pp.~1439--1440, 2013.

\bibitem{dehghan2017adaptive}
S.~Dehghan, N.~Amjady, and A.~J. Conejo, ``Adaptive robust transmission expansion planning using linear decision rules,'' {\em IEEE Transactions on Power Systems}, vol.~32, no.~5, pp.~4024--4034, 2017.

\bibitem{jabr2017linear}
R.~A. Jabr, ``Linear decision rules for control of reactive power by distributed photovoltaic generators,'' {\em IEEE Transactions on Power Systems}, vol.~33, no.~2, pp.~2165--2174, 2017.

\bibitem{zeng2013solving}
B.~Zeng and L.~Zhao, ``Solving two-stage robust optimization problems using a column-and-constraint generation method,'' {\em Operations Research Letters}, vol.~41, no.~5, pp.~457--461, 2013.

\bibitem{bertsekas1997nonlinear}
D.~P. Bertsekas, ``Nonlinear programming,'' {\em Journal of the Operational Research Society}, vol.~48, no.~3, pp.~334--334, 1997.

\bibitem{chen2020unified}
Y.~Chen, R.~P. O'Neill, and P.~Whitman, ``A unified approach to solve convex hull pricing and average incremental cost pricing with large system study,'' tech. rep., Working Paper, 2020.

\end{thebibliography}

\end{document}